\documentclass[journal,twoside,web]{IEEEtran}
\usepackage{cite}
\usepackage{amsmath,amssymb,amsfonts}
\usepackage{amsthm}
\usepackage{algorithmic}
\usepackage{graphicx}
\usepackage{graphics}
\usepackage{tcolorbox}
\usepackage{url}
\usepackage{color}
\usepackage{hyperref}
\usepackage{textcomp}
\usepackage{mathtools, cuted}
\usepackage{lipsum}
\usepackage{stfloats}
\usepackage{multirow}
\usepackage{wrapfig}
\usepackage{graphicx}
\usepackage{caption}
\usepackage{subcaption}
\usepackage[T1]{fontenc}
\usepackage[utf8]{inputenc}

\definecolor{subsectioncolor}{RGB}{0,0,128} % Dark blue

\usepackage{amsmath,amssymb,euscript,psfrag,latexsym,graphicx}
\usepackage{bbm,color,amstext,wasysym,cuted,mathtools, cite}

\usepackage[normalem]{ulem}
\graphicspath{{./},{./figures/}}
\usepackage{dsfont}
\usepackage{ulem}

\newcommand{\cJ}{{\cal J}}

\newcommand{\cC}{{\cal C}}
\newcommand{\cK}{{\cal K}}
\newcommand{\cF}{{\cal F}}

\newcommand{\cH}{{\cal H}}
\newcommand{\cV}{{\cal V}}

\newcommand{\cD}{{\cal D}}

\newcommand{\mC}{{\mathbb C}}

\newcommand{\mR}{{\mathbb R}}
\usepackage{algorithm}
\usepackage{algorithmic}
\usepackage[algo2e,linesnumbered,ruled]{algorithm2e}

\newcommand{\mU}{{\mathbb U}}

	\usepackage[utf8]{inputenc}

\newcommand{\bk}{{\mathbf k}}

\usepackage{mathtools,halloweenmath}

\newtheorem{theorem}{Theorem}
\newtheorem{definition}{Definition}
\newtheorem{lemma}{Lemma}
\newtheorem{assumption}{Assumption}
\newtheorem{remark}[theorem]{Remark}
\newtheorem{property}{Property}

\usepackage[dvipsnames]{xcolor}

\newtheorem{proposition}{Proposition}
\begin{document}

\title{Discovering the Kalman-Bucy-Koopman Filter}

\author{ Umesh Vaidya, {\it IEEE Senior Member}
\thanks{
Financial support from of NSF CMMI 2031573 is greatly acknowledged. The author is with the Department of Mechanical Engineering, Clemson University, Clemson SC, 29631. }}

\maketitle

\begin{abstract}
This paper introduces the Kalman Bucy Koopman (KBK) filter, a novel framework for nonlinear state estimation grounded in Koopman operator spectral theory. The nonlinear estimation problem is formulated as a maximum-likelihood (Mortensen) estimator whose solution is characterized by a Hamilton–Jacobi (HJ) partial differential equation. The proposed KBK filter provides a spectral, operator-theoretic realization of this nonlinear filtering problem by parameterizing the HJ value function in terms of principal Koopman eigenfunctions. This transformation converts the nonlinear estimation problem into a Riccati-type evolution in Koopman coordinates, yielding a linear-operator analogue of the classical Kalman–Bucy filter while preserving nonlinear structure in the original state variables.
We develop a path-integral formulation for computing principal Koopman eigenfunctions and introduce a dynamics-informed, characteristics-inspired basis construction for their approximation. Theoretical error bounds are derived for value-function and state-estimation approximations. Simulation results demonstrate improved performance over the extended Kalman filter and illustrate the ability of the KBK framework to operate in data-driven settings without explicit model linearization.

% This paper introduces the Kalman–Bucy- Koopman (KBK) filter, a novel framework for nonlinear state estimation derived through the spectral theory of the Koopman operator. The nonlinear estimation problem is formulated as a maximum-likelihood estimator whose solution is given in terms of solution of the Hamilton–Jacobi (HJ) partial differential equation. The KBK filter provides the first spectral, operator-theoretic realization of the Mortensen nonlinear filtering problem. By leveraging the spectral decomposition of the Koopman operator, the solution to the HJ equation is parameterized using the principal Koopman eigenfunctions, resulting in a linear representation of an otherwise nonlinear estimation problem. This spectral transformation reveals an elegant connection between nonlinear filtering and linear estimation theory, leading to a Koopman-based analogue of the classical Kalman filter. Beyond the theoretical formulation, we develop a path-integral algorithm for the numerical computation of the principal Koopman eigenfunctions required for implementing the proposed filter. We also propose a dynamics-informed basis construction method, inspired by the method of characteristics, for approximating the principal eigenfunctions.
% Simulation studies validate the theoretical developments and demonstrate the efficacy of the proposed Kalman–Bucy–Koopman filter—both in comparison with the extended Kalman filter and in data-driven estimation settings.
\end{abstract}

\section{Introduction}
The problem of nonlinear state estimation is central to modern control, robotics, signal processing, and machine learning \cite{jazwinski1970stochastic,simon2006optimal,anderson1979optimal}.
While the celebrated Kalman–Bucy filter \cite{kalman1961new} provides an optimal solution for linear systems with Gaussian noise processes, extending its analytical tractability and structural optimality to nonlinear systems remains a fundamental challenge in estimation theory.
Classical approaches such as the Extended Kalman Filter (EKF), Unscented Kalman Filter (UKF), and Particle Filters (PF) approximate the nonlinear dynamics or posterior density through local linearization, sigma-point sampling, or stochastic simulation \cite{julier2004unscented,julier2000new,doucet2001sequential}.
Despite their widespread use, these methods often lack the elegant linear–quadratic structure  that underpin the linear Kalman–Bucy framework \cite{fleming1975deterministic}.
As a result, nonlinear estimation algorithms frequently involve trade-offs between accuracy, interpretability, and computational complexity, especially in high-dimensional or data-driven settings.

The growing intersection of estimation and machine learning has further amplified the importance of nonlinear filtering.
Deep state-space models (SSMs) including the Deep Kalman Filter and its variants extend probabilistic state estimation to large-scale learning tasks \cite{krishnan2015deep,karl2017deep}. Similarly, Neural ODEs and continuous-time latent dynamics models embed estimation within differentiable architectures for sequential prediction and control 
\cite{chen2018neural}.
Recent structured state-space models such as S4 and DSS further highlight the central role of state estimation in long-range sequence modeling and modern machine learning 
\cite{gu2022s4,gu2021dss}.
Despite these advances, most learning-based estimators rely on parametric surrogates or amortized inference, with limited theoretical connection to classical filtering theory.
There remains a pressing need for estimation framework that preserve analytical structure while leveraging data-driven representations of nonlinear dynamics.

In this paper, we propose a fundamentally new approach to nonlinear state estimation based on Koopman operator theory.
The Koopman operator provides a linear, infinite-dimensional representation of nonlinear dynamics by acting on functions called observables of the system state \cite{koopman1931hamiltonian,mezic2005spectral,mezic2013analysis}.
This operator-theoretic framework enables the analysis, control, and estimation of nonlinear systems using linear operator tools \cite{mauroy2016global,mezic2021koopman,surana2016linear,williams2015data,surana2016koopman}.
Building on our prior work \cite{vaidya2025koopman} established a connection between the solution of the Hamilton–Jacobi (HJ) equation and the spectral properties specifically, the principal eigenvalues and eigenfunctions of the Koopman operator.
In the present paper, we exploit this connection in the context of nonlinear estimation.

Two popular formulations of the stochastic filtering problem, include the Zakai and Kushner–Stratonovich (KS) filters \cite{bain2009fundamentals,kushner1964differential,zakai1969optimal}.
Both formulations describe the evolution of the state-density function through the Fokker–Planck or Kolmogorov forward equation, which is the dual of the Koopman advection equation with an added diffusion term \cite{Lasota,vaidya_stochastic_linearoperator}.
Alternatively, the solution to the nonlinear filtering problem can be obtained from the HJ equation, leading to what is classically known as the maximum-likelihood or Mortensen filter \cite{mortensen1968maximum,fleming1975deterministic}.
This HJ-based formulation forms the foundation for the proposed Kalman–Bucy–Koopman (KBK) filter, which generalizes the Kalman–Bucy filter to nonlinear systems through a linear operator-theoretic formulation.

The key insight underlying our approach is that the optimal value function solving the HJ equation can be parameterized in terms of the Koopman principal eigenfunctions.
This yields a quadratic representation of the cost functional in the Koopman eigenfunction coordinates, transforming the nonlinear estimation problem into a linear estimation problem in the lifted (Koopman) space.
The resulting KBK filter evolves according to Riccati-type differential equations, analogous to the Kalman–Bucy equations, but expressed in terms of Koopman eigenfunction coordinates.
Moreover, the inclusion of the gradient and Hessian of the eigenfunctions in the gain dynamics ensures that the filter remains fully nonlinear in the original state variables while preserving linearity in the principal eigenfunction coordinates.

The literature on Koopman-based control and estimation has grown rapidly in recent years \cite{mauroy2016global,peitz2019koopman,villanueva2021towards,borggaard2009control, abraham2019active,korda2018linear, sootla2018optimal,otto2021koopman,fackeldey2020approximative,kaiser2021data,ma2019optimal,huang2018feedback,huang2022convex}.
Surana \cite{surana2016linear,surana2016koopman} proposed Koopman observer synthesis for nonlinear systems using linear models in the lifted observable space, demonstrating improved robustness compared to EKF-type estimators.
\cite{mauroy2016global,mezic2021koopman,susuki2016applied} exploited Koopman eigenfunctions for global stability, observability, and isostable analysis, highlighting their potential for globally valid coordinate transformations.
More broadly, Koopman spectral expansions and Extended Dynamic Mode Decomposition (EDMD) methods enable the data-driven identification of linear operator representations for nonlinear systems, while kernel and reproducing-kernel Hilbert space (RKHS) formulations offer convex, mesh-free frameworks for eigenfunction approximation \cite{hamzi2025kernel,lee2025kernel}.

Within this rich body of work, the proposed KBK filter provides a systematic bridge between Koopman operator theory, Hamilton–Jacobi optimal estimation, and Kalman–Bucy filtering.
Unlike previous data-driven observer approaches that rely on regression or local linearization in the lifted space, the KBK framework is derived directly from the variational structure of the estimation problem.
It offers an operator-theoretic analog of the Kalman–Bucy filter that evolves over the space of eigenfunctions yielding an estimator that is linear in the operator, yet nonlinear in the state. Unlike lift-and-linearize approaches, the KBK filter is derived from the Hamilton–Jacobi optimality principle and is not a regression-based linear approximation.

The main contributions of this paper are summarized as follows.  We establish a spectral representation of the Hamilton–Jacobi equation that arise in Mortensen nonlinear estimation in terms of Koopman principal eigenfunctions. We derive the Kalman–Bucy–Koopman (KBK) filter equations, showing that the nonlinear estimation problem can be recast as a Riccati-type evolution in the Koopman eigenfunction coordinates. Mortensen filtering problem is shown to be solved exactly under the assumption that the output mapping lies in the span of the Koopman eigenfunctions. We further provide rigorous error bounds between the true and the approximated optimal value function when the measurement function does not lies within the span of the Koopman principal eigenfunctions thereby providing approximate solution to the filtering problem.   These value-function error bounds are subsequently used to derive error bounds on the maximum-likelihood estimates. We show that the approximation error can be improved by including higher order Koopman principal eigenfunction used in the parameterization of the optimal value function. 
We present a path-integral-based approach for approximating Koopman principal eigenfunctions, enabling the practical implementation of the proposed filter. The finite-time path integral formula is also used to propose a novel dynamics–informed eigenfunction approximation method. This method relies on a set of characteristic-inspired basis functions construction that naturally encode the system dynamics.   Through simulation studies on benchmark nonlinear systems, we show that the KBK filter achieves superior performance compared with the extended Kalman filter (EKF). A key advantage of the KBK filter is that it can be implemented in a data-driven setting without explicit knowledge of the system dynamics—unlike the EKF, which requires such knowledge for linearization. We demonstrate this data-driven capability of the KBK filter through an illustrative example.
% we show using simulation studies that the KBK filter can be implemented in data-driven setting without the knowledge of true system dynamics.  

The proposed Kalman-Bucy-Koopman filter  thus introduces a new paradigm for nonlinear estimation combining the optimality and structure of the Kalman–Bucy theory, the spectral linearity of the Koopman operator, and the data-driven flexibility of modern machine learning to offer a unified foundation for operator-theoretic filtering and inference.

\section{Preliminaries and Notations}
In this section, we present some preliminaries on the spectral theory of the Koopman operator and nonlinear estimation. We refer the readers to \cite{Lasota,mezic2020spectrum,mortensen1968maximum,bain2009fundamentals} for further details on the preliminaries.

\noindent {\bf Notations}: $\mR^n$ denotes the $n$ dimensional Euclidean space. We denote by  ${\cal C}^k$ the space of $k$-times continuously differentiable functions and ${\cal C}^0$ the space of continuous function. $\mC$ denotes the set of complex numbers. 
We denote by $s_t(x)$ and $s_t(x)$ the solutions of  systems, $\dot x=f(x)$. We use the notation
\[\nabla\cdot F(x)=\sum_{i=1}^n \frac{\partial F_i}{\partial x_i},\;\;\;\; \nabla g =\left(\frac{\partial g}{\partial x_1},\ldots, \frac{\partial g}{\partial x_n}\right)\]
where $F=(F_1,\ldots, F_n)^\top$ is a vector and $g(x)$ is a scalar valued function of $x$. 
\subsection{Spectral Theory of Koopman Operator}
In this section, we provide a brief overview of existing results on the spectral theory of the Koopman operator. For more details on this topic, refer to \cite{mezic2020spectrum,mezic2021koopman}. Consider the dynamical system
\begin{align}
    \dot x={f }(x),\label{odesys}
\end{align}
defined on a state space $x\in \mR^n$.  The vector field $f$ is assumed to be smooth function. Let $\cF\subseteq \cC^0$ be the function space of observable $\psi: \mR^n\to \mC$.
% The preliminaries presented will apply to the uncontrolled dynamical system of the form $\dot \bx=\bff(\bx)$ with $n$-dimensional state space and the Hamiltonian system with $2n$-dimensional state space. 
% where $\cZ$ is an open set containing the origin.
% \begin{remark}\label{remark_generalsys}
% The main contribution of this paper is in providing two different procedures for approximating the Lagrangian submanifold and the HJ solution. These two approaches relies on the spectral properties of Koopman operator associated with uncontrolled system $\dot \bx=\bff(\bx)$ (refer to Eq. (\ref{cont_sys}))
% and the Hamiltonian system (\ref{Ham_system}). Hence, we describe the spectral theory of Koopman in the context of system (\ref{odesys}) so that the state $z$ could either correspond to state $\bx$ i.e., $\cZ=\cM$ or $z=(\bx^\top ,\bp^\top)^\top$ i.e., $\cZ=T^\star \cM$ depending upon the procedure used in the approximation.
% \end{remark} 
% \begin{remark}\label{remark_generalsys2} From Assumptions \ref{assume_system} and \ref{assumption_hamiltonianeigenvalues} it follows that the vector field $\bF$ is also $\cC^\infty$ function with the origin as the hyperbolic equilibrium point and $\bE:=\frac{\partial \bF}{\partial z}(0)$ having all eigenvalues distinct. 
% \end{remark}
We have following definitions for the Koopman operator and its spectrum. 
\begin{definition}[Koopman Operator]  The family of Koopman  operators $\mathbb{U}_t:\cF\to \cF$ corresponding to ~\eqref{odesys}   is defined as 
\begin{eqnarray}[\mathbb{U}_t \psi](x)=\psi(s_t(x)). \label{koopman_operator}
\end{eqnarray}
If in addition $\psi$ is continuously differentiable, then $\varphi(x,t):=[\mU_t \psi ](x)$ satisfies a partial differential equation \cite{Lasota} 
\begin{align}
\frac{\partial \varphi}{\partial t}=\frac{\partial \varphi}{\partial x} f=: \cK_f \varphi \label{Koopmanpde}
\end{align}
with the initial condition $\varphi(x,0)=\psi(x)$. The operator $\cK_f$ is the infinitesimal generator of $\mU_t$ i.e.,
\begin{eqnarray}
{\cal K}_{f} \psi=\lim_{t\to 0}\frac{(\mathbb{U}_t-I)\psi}{t}. \label{K_generator}
\end{eqnarray}
\end{definition}

\begin{definition}\label{definition_koopmanspectrum}[Eigenvalues and Eigenfunctions of Koopman] A function $\phi_\lambda(x)$, assumed to be at least $\cC^1$,  is said to be an eigenfunction of the Koopman operator associated with eigenvalue $\lambda$ if
\begin{eqnarray}
[\mU_t \phi_\lambda](x)=e^{\lambda t}\phi_\lambda(x)\label{eig_koopman}.
\end{eqnarray}
Using the Koopman generator, the (\ref{eig_koopman}) can be written as 
\begin{align}
    \frac{\partial \phi_\lambda}{\partial x}{ f}=\lambda \phi_\lambda\label{eig_koopmang}.
\end{align}
\end{definition}
The eigenfunctions and eigenvalues of the Koopman operator enjoy the following property \cite{mezic2020spectrum,budivsic2012applied}. 
\begin{property}\label{property1}
Let $\phi_{\lambda_1}$ and $\phi_{\lambda_2}$ are the eigenfunctions of the Koopman generator associated with eigenvalues $\lambda_1$ and $\lambda_2$ respectively. If $\phi_{\lambda_1}^{k_1}\phi_{\lambda_2}^{k_2}\in {\cal C}^1$, for $k_1,k_2\in \mR^{+}$, then it is an eigenfunctions of Koopman generator with eigenvalue $k_1\lambda_1+k_2\lambda_2$. 
\end{property}
\noindent The property is true because
\begin{align}
[\mU_t \phi_{\lambda_1}^{k_1}\phi_{\lambda_2}^{k_2}]&=[\mU_t\phi_{\lambda_1}^{k_1}\mU_t\phi_{\lambda_2}^{k_2}]=[\mU_t\phi_{\lambda_1}]^{k_1}[\mU_t\phi_{\lambda_2}]^{k_2}\nonumber\\
&=e^{(k_1\lambda_1+k_2\lambda_2)t}\phi_{\lambda_1}^{k_1}\phi_{\lambda_2}^{k_2}.
\end{align}

\begin{definition}[Koopman mode decomposition (KMD)] \label{definition_kmd} Consider a scalar-valued function $g: \mR^n\to \mR$, and assume that the function $g$ can be expanded in terms of Koopman eigenfunctions as follows.
\begin{align}
   g(x)= \sum_{\bk\in \mathbb{N}^n}^N\bar g_\bk \prod\limits_{i = 1}^{n} \phi_{i}^{k_i}(z),\label{decomposition}
\end{align}
where $\bk=(k_1,\ldots, k_n)$ and $\bar g_{\bk}$ are the Koopman modes and correspond to the projection of function $g(z)$ on the eigenfunctions, $\phi_{\lambda_1}^{k_1}(z),\ldots,\phi_{\lambda_n}^{k_n}(z)$. 
% The scalar-valued function will propagate under system dynamics as follows
% \begin{align}
% g(\bs_t(z))=[\mU_t g](z)=\sum_{\bk\in \mathbb{N}^n}^N\bar g_\bk \prod\limits_{i = 1}^{n} \phi_{\lambda_i}^{k_i}(z)e^{k_i\lambda_i t}.
% \end{align}
\end{definition}
The spectrum of the Koopman operator is, in general, infinite–dimensional and
depends sensitively on the choice of the underlying function space used for
its representation and approximation \cite{mezic2020spectrum}. Consequently,
not all spectral components are equally relevant for control or estimation
purposes.

In this work, we focus on a particular portion of the spectrum associated with
the local linearization of the nonlinear system around an equilibrium point.
Under the hyperbolicity assumption on the equilibrium of system
(\ref{odesys}), this part of the spectrum is well defined and consists of
eigenvalues that coincide with those of the Jacobian of the linearized
dynamics. We therefore restrict attention to the corresponding Koopman
eigenfunctions with eigenvalues that matches with the local linearization of the nonlinear
flow. The relevant theoretical properties summarized below follow from
\cite{mezic2020spectrum}.

While the Koopman spectrum is classically defined over the entire state space
and over infinite time horizons (cf.\ (\ref{eig_koopman})–(\ref{eig_koopmang})),
it can also be meaningfully characterized over finite time intervals or
restricted subsets of the state space. In particular, \emph{subdomain} or
\emph{open} eigenfunctions are defined  on subsets of the state space
and provide a well-posed spectral description of the dynamics in a
neighborhood of the equilibrium \cite[Definition~5.2, Lemma~5.1]{mezic2020spectrum}.

The filter construction in this paper employs the \emph{principal
eigenfunctions} of the Koopman operator. These eigenfunctions are locally
defined and possess eigenvalues that match those of the linearization of the
nonlinear system at the equilibrium \cite[Proposition~5.8]{mezic2020spectrum}.
They therefore furnish intrinsic coordinates that linearize the dynamics to
first order and naturally connect to classical results such as the
Hartman--Grobman theorem and Poincaré normal forms \cite{arnold2012geometrical}.

Depending on the regularity of the flow, these principal eigenfunctions may be
defined either on a proper subset $\mathcal P \subset \mathbb{R}^n$
(subdomain eigenfunctions) or globally on the entire state space
\cite[Lemma~5.1, Corollaries~5.1--5.2, 5.8]{mezic2020spectrum}. In the sequel,
these principal eigenfunctions serve as the lifting coordinates underlying the
proposed spectral filter dynamics.

\subsection{Estimation in Nonlinear Systems}\label{section_nonlinearestimation}
For the nonlinear estimation problem, we consider system with output measurements of the form
% Let the state \(x(t) \in \mathbb{R}^n\) evolve as an Itô stochastic differential equation
\begin{align}
    \dot x = f(x) + w(t), \;\;y=h(x)+v(t)\label{system_measurement}
\end{align}
where $x\in \mR^n$ and $y\in \mR^p$ are the state and output measurement respectively. $w(t)\in \mR^n$  and  $v(t)\in \mR^p$ are a white Gaussian noise processes assumed to be independent with covariance matrices $R\in \mR^{n\times n}$ and $Q\in \mR^{p\times p}$ respectively i.e.,
\[E[w(t)]=0,\;\;\;E(w(t)w^\top(\tau))=R\delta(t-\tau)\]
\[E[v(t)]=0,\;\;\;E(v(t)v^\top(\tau))=Q\delta(t-\tau).\]
The stochastic differential equation in (\ref{system_measurement}) is assumed to be in the sense of Itô.  Both the vector field $f$ and the output mapping $h$ are assumed to be smooth function of $x$. For the estimation problem, the control input is typically ignored as it is assumed to known. 
The initial prior probability  density of the state $x$ is denoted by
\begin{equation}
    p(x,0) = p_0(x), \qquad \int_{\mathbb{R}^n} p_0(x)\,dx = 1.
\end{equation}

% \subsection{Prediction: The Fokker--Planck Equation}

Between measurements, the state density \(p(x,t)\) evolves deterministically according to the
\textit{Fokker--Planck (FP)} or \textit{Kolmogorov forward} equation:
\begin{align}
    \frac{\partial p}{\partial t}(x,t)
    = -\nabla\!\cdot\!\big(f(x)\,p(x,t)\big)\nonumber\\
    + \frac{1}{2}\sum_{i,j=1}^n
        \frac{\partial^2}{\partial x_i \partial x_j}
        \Big(R_{ij}\,p(x,t)\Big).
\end{align}
The above equation can be written in the compact form using the operator notation as follows.  
\begin{equation}
    \frac{\partial p}{\partial t} = \mathcal{L}^* p,
    \qquad \mathcal{L}^* p := -\nabla\!\cdot(f p)
    + \tfrac{1}{2} \nabla\!\cdot\!\big(R\nabla p\big).
\end{equation}
The unnormalized density 
$\bar{p}(x,t)$ satisfies the \textit{Zakai equation}:
\begin{equation}
    \frac{\partial \bar{p}}{\partial t} = \mathcal{L}^* \bar{p}
    + \bar{p}\,h(x)^\top Q^{-1}y.
\end{equation}
 This is a linear Stochastic partial differential equation (SPDE), linear in $\bar p$, but stochastic because $y$ is random. The normalized posterior \(p =\frac{ \bar{p} }{ \int \bar{p} dx}\)
satisfies the \textit{Kushner--Stratonovich (KS)} equation:
\begin{align}
    \frac{\partial p}{\partial t}&= \mathcal{L}^* p
    + p\,\big(h - \bar{h}\big)^\top Q^{-1}\,\big(y - \bar{h}\big),\nonumber\\  
    \bar{h}(t) &= \int h(x)\,p(x,t)\,dx.
\end{align}

Thus, the prediction step is governed by the FP operator \(\mathcal{L}^*\),
and the correction step introduces information from the measurements. The Kushner–Stratonovich equation is a nonlinear SPDE because normalization introduces division by a random integral $\int \bar p dx$. 
%\subsection{Estimators from the Posterior Density}
There are different ways to build an estimator given the posterior \(p(\cdot,t)\). In particular, we can use\\
    \textit{MMSE estimator:} 
    \[\hat{x}_{\mathrm{MMSE}}(t) = \int x\,p(x,t)\,dx. \]
 \textit{MAP estimator:} 
    \[ \hat{x}_{\mathrm{MAP}}(t) = \arg\max_x p(x,t). \]
   \textit{Credible sets:} 
    regions \[\mathcal{C}_\alpha(t) {\rm s.t.} 
    \int_{\mathcal{C}_\alpha} p(x,t)\,dx = \alpha.\]

% Smoothing (estimating \(x(\tau)\) for \(\tau < t\)) uses a forward FP/KS propagation and a backward adjoint pass.

\subsection{Special Case: Linear--Gaussian (Kalman--Bucy Filter)}
For the special case when the system dynamics and the measurement model are linear, we obtain the Kalman-Bucy filter. Consider the following linear time invariant system 
\begin{align}
    \dot x &= A x + w(t),\\
    y &= Cx + v(t),
\end{align}
with $w(t)$ and $v(t)$ are independent Gaussian white noise with covariance matrices $R$ and $Q$ respectively. The FP solution remains Gaussian with mean \(\hat x(t)\) and covariance \(P(t)\)
satisfying the following \textit{Kalman--Bucy equations}:
\begin{align}
    \dot{\hat{x}} &= A \hat{x}+ K(t)\,\big(y - C\hat{x}\big),
    \qquad K(t) = S(t) C^\top Q^{-1}, \\[4pt]
    \dot{S} &= A S + S A^\top + R - S C^\top Q^{-1} C S.
\end{align}
with initial covariance $S(0)=S_0$.  
This linear Gaussian filter is a finite-dimensional projection of the FP/KS filter.

\section{ Kalman-Bucy-Koopman Filter via HJ Formulation of Optimal Estimation}

Instead of evolving the full probability density (as in the Fokker--Planck or Kushner--Stratonovich equations), we may seek the most probable trajectory \(x(t)\) given the observations \(y(t)\).  
This leads to the maximum a posteriori (MAP) or minimum-energy estimation problem also called as Mortensen filter. For the MAP or minimum-energy estimation problem we  consider nonlinear stochastic system with output measurement where we borrow the set-up from \ref{section_nonlinearestimation}
\begin{align}
    \dot{x} = f(x) + \,w(t),\;
    \qquad y = h(x) + v(t),\label{sys_output}
\end{align}
with \(w(t)\) and \(v(t)\) are  assumed to be independent Gaussian white noises with covariances matrices $R$ and $Q$ respectively. 
The goal is to estimate the hidden state \(x(t)\) from the noisy observations \(y(t)\). The initial state is $x(0)$, where $x(0)$ is a random vector with Gaussian distribution with covariance matrix  $\Sigma$ and mean $\mu$. The cost functional for the maximum posteriori problem is given by
\begin{align}
    J_t[x(\cdot)] =\frac{1}{2}(x-\mu)^\top \Sigma^{-1}(x-\mu)\nonumber\\+ \frac{1}{2}\int_{0}^t 
    \Big[
        (\dot{x} - f(x))^\top R^{-1} (\dot{x} - f(x))\nonumber\\
        + (y - h(x))^\top Q^{-1} (y - h(x))
    \Big]\,d\tau.
    \label{eq:action}
\end{align}
\begin{align}
    J_t[x(\cdot)] =V_0(x)+ \frac{1}{2}\int_{0}^t 
    \Big[
        (\dot{x} - f(x))^\top R^{-1} (\dot{x} - f(x))\nonumber\\
        + (y - h(x))^\top Q^{-1} (y - h(x))
    \Big]\,d\tau.
    \label{eq:action}
\end{align}
This cost functional penalizes the deviations from the model dynamics (\(w\)-term)  and the
 mismatch between predicted and observed measurements (\(v\)-term).

For the computation of the maximum-likelihood estimate  the output $y(\tau)$, $0\leq \tau\leq t$, over a time interval.  Substituting for $y(\tau)$ into $J_t$ in (\ref{eq:action}), then it is functional only of $x(\tau)$, $0\leq \tau\leq t$ with both end conditions $x(0)$ and 
$x(t)$ free. Using standard variational methods, we determine the trajectory
$x^\star(t)$, $0\leq \tau\leq t$,  which minimizes $J_t$. The modal-trajectory estimate is
then $x^\star(t)$, the final state of the most probable trajectory at time $t$, given
$y(\tau)$, $0\leq \tau\leq t$. Only the final value of the
minimizing trajectory is of any significance. We refer to \cite{mortensen1968maximum} for more detailed discussion on this.

Suppose now that we wish to construct a sequential estimator. This
means that, as $t$ varies, we wish continually to update the optimal estimate.
Let $x^\star_{[0,t]}(\tau), 0\leq \tau\leq t$, denote the trajectory which minimizes $J_t$ when $y(\tau),
0 \leq \tau\leq t$ is given. Let $\hat x(t)$ denote the optimal maximum-likelihood estimate
at time $t$ when $y(\tau), 0\leq \tau\leq t$ is given.

The Hamilton Jacobi theory can be applied to solve this problem. We first define

\begin{align}
V(x,t)=\min_{u(\tau),0\leq \tau\leq t}J_t
\end{align}
It is known that the $V(x,t)$ is the viscosity solution of the following HJ equation \cite{krener2004convergence}
% \begin{align}
% \frac{\partial V}{\partial t}+H(x,\nabla V_x,t)=0
% \end{align}
\begin{align}
\frac{\partial V}{\partial t}+\frac{\partial V}{\partial x} f(x)
    +\frac{1}{2} \frac{\partial V}{\partial x} R\frac{\partial V}{\partial x}^\top \nonumber\\
    -\frac{1}{2}(y(t) - h(x))^\top Q^{-1} (y(t) - h(x)).
\label{HJequation}
\end{align}
 and the initial condition for the PDE given by
% \begin{align}V(x,0)=\frac{1}{2}(x-\mu)^\top \Sigma^{-1} (x-\mu)\label{boundaryconditionHJ}
% \end{align}
\begin{align}V(x,0)=V_0(x)
\label{boundaryconditionHJ}
\end{align}
with the minimum of this function at $x=\mu$. We make the following assumption on the solution of the HJ PDE.
\begin{assumption}\label{assumption_HJviscositysolution} The Hamiltonian corresponding to the HJ PDE (\ref{HJequation}) is quadratic in the gradient $\frac{\partial V}{\partial x}$ and hence convex. We assume that the HJ PDE (\ref{HJequation}) admits a viscosity solution which is  semi-continuous \cite{CrandallIshiiLions1992}.  
\end{assumption}
The optimal estimate $\hat x(t)$ is obtained as the root of following equation
\begin{align}\nabla V(x,t)=0\label{optimalcondition}
\end{align}
The optimal or maximal likelihood estimate, $\hat x(t)$, is the root of the above equation. Computationally and from the online implementation perspective it is more efficient to obtain the differential equation governing  the evolution of $\hat x(t)$. This governing equation is derived as follows \cite{mortensen1968maximum}. Taking time derivative of (\ref{optimalcondition}), we obtain
\begin{align}
\frac{d}{dt}\left(\frac{\partial V(x,t)}{\partial x}\right)\Big|_{x=\hat x(t)}=\frac{\partial^2 V }{\partial x \partial  t}+\frac{\partial^2 V}{\partial x^2}\frac{d\hat x}{dt}\Big|_{x=\hat x(t)}
\end{align}
Let, 
\begin{align}\Pi_{ij}(x,t):=\frac{\partial ^2 V}{\partial x_i x_j}(x,t)\label{Hessian}
\end{align}
Using the Hessian notation and (\ref{HJequation}), we can write the above as 
\begin{align}
\frac{d}{dt}V_x(x,t)\Big|_{x=\hat x(t)}=\Big\{-H_x(x,\nabla_x V(x,t),t)\nonumber\\
+\Pi(x,t)\frac{d\hat x}{dt}\Big\}_{x=\hat x(t)}
\end{align}

Under the assumption that the Hessian of $V(x,t)$ i.e, the matrix $\Pi$ in  (\ref{Hessian}) is invertible it can be shown that the the differential equation for the evolution of the maximum likelihood estimate is given by \cite{mortensen1968maximum}
\begin{align}
\dot {\hat x}=f(\hat x)+\Pi^{-1}(\hat x,t)\frac{\partial h}{\partial x}(\hat x)Q^{-1}(y-h(\hat x))\label{diffeqn_maximalestimate}
\end{align}
The initial condition for the above differential equation is $\hat x(0)=\mu$. We next proceed with deriving the maximum likelihood filter equations in the Koopman eigenfunction coordinates.
We make following assumption on the system dynamics for the estimator design problem using Koopman.

\begin{assumption}\label{assume_peig} We assume that the origin is an hyperbolic equilibrium point of the system $\dot x=f(x)$ and $\Phi(x)\in \mR^n$ are the principal eigenfunctions of the Koopman operator associated with the matrix of eigenvalue $\Lambda$ i.e.,
\begin{align}
\frac{\partial \Phi}{\partial x}f(x)=\Lambda \Phi(x),\label{Koopmaneigfunction}
\end{align}
The matrix $\Lambda$ is in real Jordan canonical form as the principal eigenfunctions are assumed to be real. We assume that the eigenfunctions are well defined in  $\mR^n$.  
\end{assumption}
\begin{remark}
The principal eigenfunctions are assumed to be well defined on $\mathbb{R}^n$ in the above. 
In general, however, principal eigenfunctions  may only be defined on an invariant subdomain $\mathcal{P} \subseteq \mathbb{R}^n$. 
In such cases, the validity of the proposed proposed filter is restricted to the region where the eigenfunctions are well defined. 
Extension of the KBK filter beyond a single invariant subdomain, for example via patching of local eigenfunctions, will be an interesting direction for future work.
\end{remark}
Following (\ref{eq:action}), we choose the following cost functional for the maximum posterior problem in the spectral Koopman coordinates 
\begin{align}
    &\cJ_t[u(\cdot)] =\frac{1}{2}(\Phi(x)-\Phi(\mu))^\top \Sigma^{-1}(\Phi(x)-\Phi(\mu))\nonumber\\&+ \frac{1}{2}\int_{0}^t 
    \Big[
        u(\tau)^\top R^{-1}_1(x) u(\tau)
        + (y - h(x))^\top Q^{-1} (y - h(x))
    \Big]\,d\tau.
    \label{eq:action_koopman}
\end{align}
with $0\leq t\leq T$ and where $R_1(x):=\frac{\partial \Phi}{\partial x}^\top R^{-1} \frac{\partial \Phi}{\partial x}$. The objective is to choose $x(t)$ to minimize $\cJ_t$ with $x(0)$ free subject to the system equation constraints (\ref{sys_output}). Comparing the objective function (\ref{eq:action_koopman}) for the proposed Kalman-Bucy-Koopman filter with the objective function (\ref{eq:action}), we notice the  presence of the state dependent term, $R_1(x)$ in the cost function. 
Importantly, the introduction of the state-dependent weighting $R_1(x)$ does not alter the underlying stochastic model; rather, it arises naturally from expressing the Mortensen minimum-energy functional under the nonlinear coordinate transformation induced by the Koopman eigenfunctions. In this sense, the proposed formulation is equivalent to the classical Mortensen problem in the original state variables, with the Jacobian of the Koopman transformation accounting for the change of metric in the process noise penalty.
% The use of $R_1(x)$
% ensures that the HJ equation associated with the action functional becomes linear in Koopman eigenfunction coordinates, enabling an explicit quadratic parameterization of the value function. Additionally this formulation preserves the structure of the classical Mortensen problem under nonlinear coordinate changes induced by Koopman eigenfunctions.
% The state dependent term, $R_1(x)$, is introduced to facilitate the analytical solution of the associated HJ equation in the Koopman eigenfunction coordinates.
The initial condition is assumed to be quadratic but in Koopman eigenfunctions coordinates $\Phi$ with minimum at $x=\mu$, which is the estimate of the initial state $x(0)$. It is also important to emphasize that the term $\frac{\partial \Phi}{\partial x}$ in $R_1(x)$ is an invertible matrix as $\Phi$ forms a change of coordinates transforming the nonlinear system to a linear system (Assumption \ref{assume_peig}).
% The Hamiltonian associated with the optimal control problem can be written as 
% \begin{align}
%     \cH(x,p,t)
%     = p^\top (f(x)+u)
%     +\frac{1}{2} p^\top R_1^{-1}(x)\,p\nonumber\\
%     +\frac{1}{2}(y(t) - h(x))^\top Q_1^{-1} (y(t) - h(x)).
%     \label{eq:hamiltonian}
% \end{align}
% Applying optimality principle, we obtain
% \[\frac{\partial \cH}{\partial u}=0\implies u=-R_1(x)p\]
% Substuiting the optimal value of $u$ in the Hamiltonian we obtain
% \begin{align}\cH(x,p,t)=p^\top f-\frac{1}{2}p^\top R_1^{-1}(x)p\nonumber\\+\frac{1}{2}(y(t) - h(x))^\top Q_1^{-1} (y(t) - h(x))\label{hamiltonian}
% \end{align}
The optimal value function, satisfies 
\begin{align}
\cV(x,t)=\min_{u(\tau),0\leq \tau\leq t}\cJ_t\label{optimalvaluefunction_Koopman}
\end{align}
The optimal value function, $\cV(x,t)$, satisfies following HJ PDE.
\begin{align}
\partial_t \cV+H(x,\nabla_x \cV(x,t),t)=0
\label{HJequationPhi}
\end{align}
where $H$ is the Hamiltonian and of the form
\begin{align}H(x,\nabla_x \cV(x,t),t)=\frac{\partial \cV}{\partial x} f(x)
    +\frac{1}{2} \frac{\partial \cV}{\partial x} R_1^{-1}(x)\frac{\partial \cV}{\partial x}^\top \nonumber\\
    -\frac{1}{2}(y(t) - h(x))^\top Q^{-1} (y(t) - h(x)).
    \end{align}
with the initial condition given by 
\begin{align}\cV(x,0)=\frac{1}{2}\left(\Phi(x)-\Phi(\mu)\right)^\top \Sigma^{-1}\left(\Phi(x)-\Phi(\mu)\right)\label{boundary_Koopmancoordinates}
\end{align}

This section establishes quadratic parameterizations of the optimal value function using the Koopman principal eigenfunctions $\Phi$
and their higher-order products 
$\bar \Phi(x)$, leading to the representations $\cV_\Phi(x,t)$ 
 and $\cV_{\bar \Phi}(x,t)$. The main results are summarized as follows.

First, Theorem~\ref{theorem1} provides an exact representation of the optimal value function in terms of the principal eigenfunctions under the assumption that the output mapping 
$h(x)$ lies in the span of $\Phi(x)$ (Assumption~\ref{assume_span}). Under this condition, the solution of the Hamilton–Jacobi equation~\eqref{HJequationPhi} admits a quadratic form in Koopman eigenfunction coordinates, yielding $\cV(x,t)=\cV_\Phi(x,t)$
  as given in~\eqref{optimalcostKoopmanparamter}. Hence, under Assumption \ref{assume_span}, the Mortensen filter problem is solved exactly. 

Second, Theorem~\ref{theorem2} relaxes Assumption~\ref{assume_span} to provide approximate solution to the Mortensen filter problem. Theorem~\ref{theorem2} characterizes the resulting approximation error when 
$\cV_\Phi(x,t)$ is used in place of the true value function 
$\cV(x,t)$. Explicit bounds are derived on the value-function approximation error, which are subsequently used to quantify the discrepancy between the true maximum-likelihood state estimates and those obtained from the principal-eigenfunction-based approximation.

Finally, Theorem~\ref{thm:lifted_bounds}, presented in Subsection~\ref{section_KMD}, extends the parameterization framework to include higher-order products of the Koopman eigenfunctions. This enriched representation mitigates approximation error when the output mapping $h(x)$
 cannot be adequately captured by the principal eigenfunctions alone and yields computable upper and lower bounds on the optimal value function $\cV(x,t)$.

For the first main result, we  make the following assumption.
\begin{assumption}\label{assume_span}
We assume that the output function $h(x)$ lies in the span of the Koopman principal eigenfunction i.e., $h(x)\in {\rm span}\{\phi_1,\ldots,\phi_n\}$.
\end{assumption}
Following Assumption \ref{assume_span}, we know that there exists a matrix $C\in \mR^{p\times n}$ such that 
\[h_i(x)=c_i^\top \Phi,\;\;\;i=1,\ldots, p\]
where, $c_i^\top $, forms the rows of matrix $C$.   
The $c_i's$ are 
obtained from the solution of the following least square optimization problem. 
\begin{align}
\min_{c_i} \left<h_i-c_i^\top \Phi,h_i-c_i^\top \Phi\right>,\;\;i=1,\ldots,n\label{optimization_prob}
\end{align}
% {\color{red} 
% \[<h,h>-2<h,C\Phi>+<C\Phi,C\Phi>\]
% }
where $\left<\cdot,\cdot\right>$ stands for inner product in the function space
\[\left<f,g\right>:=\int_{\mR^n} f(x)g(x)dx\]
It is known that the solution to this optimization problem (\ref{optimization_prob}) is given by 
\begin{align}\mR^n\ni c_i=G^{-1}b_i,\;\;\;i=1,\ldots,n.\label{step2}
\end{align}
where 
\begin{align} G=\left<\Phi,\Phi^\top\right>\in \mR^{n\times n}, \;\;\; b_i=\left<\Phi,h_i\right>\in \mR^{n}\label{Gb_definition}
\end{align}
The numerically expensive step of computing the integral involved in the computation of matrix $C$ can be replaced with the empirical sum over some sample data points \cite{vaidya2025koopman}. In particular, let $\{x_k\}_{k=1}^L$ be the  data points sampled uniformly. We can define 
\begin{align}
\hat G=\frac{1}{L}\sum_k^L \Phi(x_k)\Phi(x_k)^\top,\;\;\;\hat b_i=\frac{1}{L}\sum_k^L \Phi(x_k)h_i(x_k)
\end{align}
The $\hat c_i$ is then obtained as 
\[\hat c_i=\hat G^\dagger \hat b_i\]
where $\dagger$ stands for pseudo inverse. 
% It is easy to show that with the above choice of $C$, we have 
% $h(x)-C\Phi(x)=0$ i.e., the residual is zero following Assumption \ref{assume_span}. We have Theorem \ref{theorem2}, where we  will relax this Assumption. 

The following is the first main result of this paper on the realization of Kalman-Bucy filter in the spectral Koopman coordinates. 
\begin{theorem}\label{theorem1}
Under Assumption \ref{assume_span}, the optimal value function (\ref{optimalvaluefunction_Koopman}) for the solution of HJ equation (\ref{HJequationPhi}) is given by the following expression
\begin{align}
\cV_\Phi(x,t)=\frac{1}{2}\Phi^\top(x)P(t)\Phi(x)+s^\top(t)\Phi(x)+r(t)\label{optimalcostKoopmanparamter}
\end{align}
where $P(t)\in \mR^{n\times n}$ is a positive matrix, $s(t)\in \mR^n$, and $r(t)\in \mR$ satisfy
\begin{align}&\dot P(t)+\Lambda^\top P(t)+P(t)\Lambda+P(t)R P(t)-C^\top Q^{-1}C=0\nonumber\\
&\dot s(t) + \Lambda^\top s(t) +P(t) R s(t)
+C^\top Q^{-1} y(t)=0\nonumber\\
&\dot r(t)+\frac{1}{2}s^\top(t) R s(t)-\frac{1}{2}y^\top(t) Q^{-1}y(t)=0\label{estimator}
\end{align}
for $t\in [0,T]$ and with initial condition 
\begin{align}
 P(0)=\Sigma^{-1},\;s(0)=-\Sigma^{-1}\Phi(\mu),\;r(0)=\frac{1}{2}\Phi(\mu)^\top \Sigma^{-1}\Phi(\mu)\label{boundarycondition_theorem}
\end{align}
\end{theorem}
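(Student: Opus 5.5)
The plan is to verify the quadratic ansatz directly. I will substitute $\cV_\Phi$ from (\ref{optimalcostKoopmanparamter}) into the HJ PDE (\ref{HJequationPhi}) and then match coefficients in the Koopman coordinates $\Phi$. Write $J(x):=\frac{\partial \Phi}{\partial x}(x)$, which is invertible by Assumption \ref{assume_peig}.

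By the chain rule, $\frac{\partial \cV_\Phi}{\partial x}=(P\Phi+s)^\top J$ when $P$ is taken symmetric, and $\partial_t\cV_\Phi=\frac12\Phi^\top\dot P\Phi+\dot s^\top\Phi+\dot r$. Each term of the Hamiltonian then simplifies in Koopman coordinates.
\begin{itemize}
\item The drift term reduces through the eigenfunction relation (\ref{Koopmaneigfunction}): $\frac{\partial \cV_\Phi}{\partial x}f=(P\Phi+s)^\top\Lambda\Phi$.
\item The quadratic term is where the state-dependent weight pays off. Since $R_1=J^\top R^{-1}J$, we have $R_1^{-1}=J^{-1}RJ^{-\top}$ and hence $JR_1^{-1}J^\top=R$. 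Therefore $\frac12\frac{\partial \cV_\Phi}{\partial x}R_1^{-1}\frac{\partial \cV_\Phi}{\partial x}^\top=\frac12(P\Phi+s)^\top R(P\Phi+s)$, which no longer depends explicitly on $x$.
\item By Assumption \ref{assume_span}, $h=C\Phi$. The measurement term expands as $-\frac12\Phi^\top C^\top Q^{-1}C\Phi+y^\top Q^{-1}C\Phi-\frac12 y^\top Q^{-1}y$.
\end{itemize}
The whole left-hand side of (\ref{HJequationPhi}) is now a quadratic polynomial in $\Phi(x)$ with time-dependent coefficients.

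Next I will collect terms by degree in $\Phi$, symmetrizing the quadratic part via $\Phi^\top P\Lambda\Phi=\frac12\Phi^\top(P\Lambda+\Lambda^\top P)\Phi$.
\begin{itemize}
\item The quadratic part gives $\frac12\Phi^\top(\dot P+\Lambda^\top P+P\Lambda+PRP-C^\top Q^{-1}C)\Phi$.
\item The linear part gives $(\dot s+\Lambda^\top s+PRs+C^\top Q^{-1}y)^\top\Phi$.
\item The constant part gives $\dot r+\frac12 s^\top Rs-\frac12 y^\top Q^{-1}y$.
\end{itemize}
Since $\Phi$ is a change of coordinates, $\Phi(\mR^n)$ contains an open set. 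A quadratic polynomial vanishing on an open set has zero coefficients, so the PDE holds if and only if the three equations in (\ref{estimator}) hold. Expanding the initial datum (\ref{boundary_Koopmancoordinates}) as $\frac12\Phi^\top\Sigma^{-1}\Phi-\Phi(\mu)^\top\Sigma^{-1}\Phi+\frac12\Phi(\mu)^\top\Sigma^{-1}\Phi(\mu)$ and matching coefficients gives (\ref{boundarycondition_theorem}).

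The main obstacle is not this algebra but showing that the resulting classical solution really is the optimal value function (\ref{optimalvaluefunction_Koopman}) on all of $[0,T]$. That requires two facts.
\begin{itemize}
\item \textbf{Global existence and positivity of $P$.} The Riccati equation is quadratic and could in principle blow up or lose positivity. I would pass to $S:=P^{-1}$, which satisfies the standard Kalman--Bucy Riccati equation $\dot S=\Lambda S+S\Lambda^\top+R-SC^\top Q^{-1}CS$ with $S(0)=\Sigma$. Classical theory gives a positive definite solution for all $t$, so $P=S^{-1}$ exists on $[0,T]$ and stays positive. The linear ODEs for $s$ and $r$ then have global solutions.
\item \textbf{Identification with the value function.} A $\cC^1$ classical solution is in particular a viscosity solution. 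Under Assumption \ref{assumption_HJviscositysolution} (convex Hamiltonian), I would invoke the comparison principle for viscosity solutions to conclude $\cV=\cV_\Phi$. Alternatively, a direct verification argument works: complete the square in $u$ along trajectories, which shows that $\cV_\Phi$ is attained by the feedback $u$ minimizing the Hamiltonian and is a lower bound for $\cJ_t$.
\end{itemize}
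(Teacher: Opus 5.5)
Your argument is correct and follows the paper's proof: substitute the quadratic ansatz into (\ref{HJequationPhi}), use $\frac{\partial \Phi}{\partial x} f=\Lambda\Phi$ together with $\frac{\partial \Phi}{\partial x}R_1^{-1}\frac{\partial \Phi}{\partial x}^\top=R$ (which the paper uses without stating it), then match the quadratic, linear and constant coefficients in $\Phi$ and the initial data. Your extra steps, global existence and positivity of $P$ via $S=P^{-1}$ and identifying the classical solution with the value function, go beyond the paper, which stops at the PDE check; one caution is that the completing-the-square verification only closes with the running weight $\frac{1}{2}u^\top R_1(x)u$, which is the weight consistent with the $R_1^{-1}$ in the Hamiltonian of (\ref{HJequationPhi}), and not with the $R_1^{-1}(x)$ weight printed in (\ref{eq:action_koopman}).
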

\begin{proof}
For the case when the Assumption \ref{assume_span} is satisfied, we write $h(x)=C\Phi(x)$, where the matrix $C$ is obtained from the solution of the optimization problem (\ref{optimization_prob}). 
Substituting the parameterization of the optimal value function in Eq. (\ref{optimalcostKoopmanparamter}) in the HJ equation (\ref{HJequationPhi}), and using the Assumption \ref{assume_span} we obtain

\begin{align}
\left(\frac{1}{2}\Phi^\top \dot P \Phi+\dot s^\top \Phi+\dot r\right)+\Phi^\top P\frac{\partial \Phi}{\partial x}f(x)+s^\top \frac{\partial \Phi}{\partial x}f(x)\\+\frac{1}{2} \Phi^\top P R P\Phi +\Phi^\top PR s+\frac{1}{2}s^\top R s \\
-\frac{1}{2}(y-C\Phi(x))Q^{-1}(y-C\Phi)=0
\end{align}
Using (\ref{Koopmaneigfunction}) and collecting terms involving quadratic, linear and no terms involving $\Phi$, we obtain
\begin{align}
&\dot P+\Lambda^\top P+P\Lambda+PRP-C^\top Q^{-1}C=0\nonumber\\
&\dot s +\Lambda^\top s  +PRs+C^\top Q^{-1}y=0\nonumber\\
&\dot r+\frac{1}{2}s^\top Rs-\frac{1}{2}y^\top Q^{-1}y=0
\end{align}
The desired boundary conditions (\ref{boundarycondition_theorem}) is obtained by equating  (\ref{optimalcostKoopmanparamter}) with (\ref{boundary_Koopmancoordinates}) at $t=0$.
% to obtain
% % \[\frac{1}{2}\Phi^\top \Sigma^{-1}\Phi=\frac{1}{2}\Phi^\top P(0)\Phi\]
% % \[-\Phi^\top(\mu)\Sigma^{-1}\Phi(x)=s^\top(0)\Phi\]
% % \[r(0)=\frac{1}{2}\Phi(\mu)^\top \Sigma^{-1}\Phi(\mu)\]

% \[ P(0)=\Sigma^{-1},\;s(0)=-\Sigma^{-1}\Phi(\mu),\;r(0)=\frac{1}{2}\Phi(\mu)^\top \Sigma^{-1}\Phi(\mu)\]
\qed
\end{proof}
Let $\Pi_\Phi(x,t)$ be the Hessian of the optimal value function $\cV_\Phi$ from (\ref{optimalcostKoopmanparamter}) with entries
\begin{align}
[\Pi_\Phi]_{ij}=\frac{\partial^2 }{\partial x_i\partial x_j}\left(\frac{1}{2}\Phi^\top(x)P(t)\Phi(x)+s^\top(t)\Phi(x)+r(t)\right)
\end{align}
The Hessian matrix is explicitly given by

\[
\Pi_\Phi
= \frac{\partial \Phi}{\partial x}^{\top} P(t)\, \frac{\partial \Phi}{\partial x}
+ \sum_{i=1}^{n}\big(P(t)\,\Phi(x)+s(t)\big)_i\, \frac{\partial^2 \phi_i(x)}{\partial x^2},
\]

% {\color{red}{
% \textbf{Gradient:}
% \[
% f(x)=\frac{1}{2}\,\Phi(x)^{\top} P \,\Phi(x)+\Phi(x)^{\top} s,\quad
% J(x):=\frac{\partial \Phi(x)}{\partial x}\in\mathbb{R}^{n\times n},\]\[
% \nabla f(x)=J(x)^{\top}\big(P\,\Phi(x)+s\big).
% \]

% \textbf{Hessian:}
% \[
% \nabla^2 f(x)
% = J(x)^{\top} P\, J(x)
% + \sum_{i=1}^{n}\big(P\,\Phi(x)+s\big)_i\, \nabla^2 \Phi_i(x),
% \]
% where \(\nabla^2 \Phi_i(x)\in\mathbb{R}^{n\times n}\) is the Hessian of the \(i\)-th component of \(\Phi\).

% \textbf{Linear special case} (\(\Phi(x)=A x\), \(A\in\mathbb{R}^{n\times n}\)):
% \[
% J(x)=A,\ \ \nabla^2 \Phi_i(x)=0
% \;\Rightarrow\;
% \nabla^2 f(x)=A^{\top} P A \ \text{(constant)}.
% \]}

% \[\]
Combining the dynamics of the maximum likelihood estimates of the state (\ref{diffeqn_maximalestimate}) with the (\ref{estimator}), we obtain the following  dynamics for the {\it Kalman-Bucy-Koopman} filter.

\[
\boxed{
\begin{aligned}
\dot {\hat x}&=f(\hat x)+\Pi^{-1}_{\Phi}(\hat x,t)\frac{\partial h}{\partial x}(\hat x)Q^{-1}(y-h(\hat x)) \\
0&=\dot P+\Lambda^\top P+P\Lambda+PRP-C^\top Q^{-1}C 
 \\0&=\dot s+\Lambda^\top s  +PRs+C^\top Q^{-1}y\\
0&=\dot r+\frac{1}{2}s^\top Rs-\frac{1}{2}y^\top Q^{-1}y\\
P(0)&=\Sigma^{-1},\;s(0)=-\Sigma^{-1}\Phi(\mu)\\r(0)&=\frac{1}{2}\Phi(\mu)^\top \Sigma^{-1}\Phi(\mu)\\
\Pi_\Phi
&= \frac{\partial \Phi}{\partial x}^{\top} P(t)\, \frac{\partial \Phi}{\partial x}
+ \sum_{i=1}^{n}\big(P(t)\,\Phi(x)+s(t)\big)_i\, \frac{\partial^2 \phi_i(x)}{\partial x^2}
\end{aligned}
}
\]

% Note that for the case when Assumption \ref{assume_span} is satisfied we have $\cV_\Phi$ from Eq. (\ref{optimalcostKoopmanparamter}) equal to $\cV$ from Eq. (\ref{optimalvaluefunction_Koopman}) i.e., $\cV_\Phi=\cV$. 
% For the case where the Assumption \ref{assume_span} is not satisfied we have $\cV_\Phi\neq \cV$ and the objective is determine the error between $\cV$ and $\cV_\Phi$. 
We now relax the Assumption \ref{assume_span}. In particular, when the Assumption \ref{assume_span} is not satisfied there will be the error between the projection $h(x)$ on the space span by the eigenfunctions $\Phi$ i.e., $C\Phi$. Let $\Delta (x)$ be the projection error i.e., 
\[\Delta(x)=h(x)-C\Phi.\]
where $C$ is obtained from the solution of the optimization problem (\ref{optimization_prob}). We make following assumption on the projection error. 
\begin{assumption}\label{assumption_approximation}
We assume that 
\begin{align}
\delta_h&=\sup_{x\in \mR^n} \|h(x)-C\Phi(x)\|\\
\delta(t)&=\sup_{x\in  \mR^n}\|y(t)-h(x(t))\|,\;\;\;t\in[0,T]
\end{align}
where $C$ is obtained as the solution of the optimization problem solved in (\ref{optimization_prob}). Furthermore, the optimal value function $\cV$ and $\cV_\Phi$ are locally strongly convex in the neighborhood of the minimizer i.e., there exists a positive $\gamma$ and $\gamma_\Phi$ such that
\begin{align}
\cV(x,t)&\geq \cV(\hat x(t),t)+\frac{\gamma}{2}\|\hat x(t)-x\|,\;\;\;\forall x\nonumber \\
\cV_\Phi(x,t)&\geq \hat \cV(\hat x_\Phi(t),t)+\frac{\gamma_\Phi}{2}\|\hat x_\Phi(t)-x\|,\;\;\;\forall x
\end{align}
\end{assumption}
 
Following results can be proved when the Assumption \ref{assume_span} is not satisfied i.e., when the output function, $h(x)$, does not belong to the span of Koopman principal eigenfunctions.
\begin{theorem}\label{theorem2} Let the Assumption \ref{assume_span} is not satisfied but the Assumption \ref{assumption_approximation} holds true. We have following error bounds between the solution of the optimal function, $\cV$, from Eq.  (\ref{optimalvaluefunction_Koopman}) obtained as the solution of the HJ equation (\ref{HJequationPhi}) and the one obtained based on the parameterization of Koopman eigenfunctions  i.e.(\ref{optimalcostKoopmanparamter}).

\begin{align}
|\cV-\cV_\Phi|\leq \lambda_{max}(Q^{-1})\int_{0}^t \left(\frac{\delta_h}{2}+\delta(\tau)\right)d\tau =:c(t)
\end{align}
for $t\in[0,T]$ and 
\begin{align}
\|\hat x(t)-\hat x_\Phi(t)\|\leq 2\sqrt{\frac{c(t)}{\gamma_{min}}}
\end{align}
where $\gamma_{min}=\min\{\gamma,\gamma_\Phi\}$. 
\end{theorem}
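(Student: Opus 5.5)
The plan is to read both $\cV$ and $\cV_\Phi$ as value functions of optimal control problems. They share the dynamics, the control penalty $u^\top R_1^{-1}(x)u$ and the initial cost (\ref{boundary_Koopmancoordinates}); they differ only in the running output penalty. For $\cV$ the penalty is $\ell(x,\tau)=\frac12(y-h(x))^\top Q^{-1}(y-h(x))$. For $\cV_\Phi$, by the proof of Theorem~\ref{theorem1}, the quadratic parameterization (\ref{optimalcostKoopmanparamter}) with the equations (\ref{estimator}) is the exact value function of the same problem with $h$ replaced by $C\Phi$, i.e.\ with penalty $\ell_\Phi(x,\tau)=\frac12(y-C\Phi(x))^\top Q^{-1}(y-C\Phi(x))$. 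The argument is then a standard comparison of two value functions, followed by a strong-convexity argument for the minimizers.

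\textbf{Pointwise bound on the running costs.} Write $a=y-h(x)$ and $b=y-C\Phi(x)=a+\Delta(x)$. Then
\[
\ell_\Phi-\ell=\tfrac12\bigl(\Delta^\top Q^{-1}\Delta+2\Delta^\top Q^{-1}a\bigr),
\]
so
\[
|\ell-\ell_\Phi|\le \lambda_{max}(Q^{-1})\bigl(\tfrac12\|\Delta\|^2+\|\Delta\|\,\|a\|\bigr).
\]
Assumption~\ref{assumption_approximation} gives $\|\Delta\|\le\delta_h$ and $\|a\|\le\delta(\tau)$. The stated constant $\frac{\delta_h}{2}+\delta(\tau)$ is linear in $\delta_h$, so I would either assume $\delta_h\le 1$ or normalize (the Cauchy--Schwarz step leaves a factor $\delta_h$ on both terms). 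This bookkeeping has to be made consistent with the claimed $c(t)$, and I expect the argument to need some harmless adjustment here.

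\textbf{Comparison of value functions.} Fix $(x,t)$ and any admissible trajectory/control ending at $x$. Its costs under the two functionals differ by at most $\int_0^t|\ell-\ell_\Phi|\,d\tau\le c(t)$, because the initial and control terms coincide. Taking the infimum on each side gives $\cV\le \cV_\Phi+c(t)$ and $\cV_\Phi\le\cV+c(t)$, hence $|\cV-\cV_\Phi|\le c(t)$. Equivalently one could use the comparison principle for viscosity solutions under Assumption~\ref{assumption_HJviscositysolution}: $\cV_\Phi\pm c(t)$ are super- and subsolutions of (\ref{HJequationPhi}), since the Hamiltonians differ by $\ell-\ell_\Phi$, and the initial data agree. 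The control-theoretic argument is cleaner, so I would use it.

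\textbf{State-estimate bound.} Let $\hat x$ minimize $\cV(\cdot,t)$ and $\hat x_\Phi$ minimize $\cV_\Phi(\cdot,t)$. Evaluate the strong convexity of $\cV$ at $x=\hat x_\Phi$, and that of $\cV_\Phi$ at $x=\hat x$, reading the growth term as quadratic, $\frac{\gamma}{2}\|\cdot\|^2$, as strong convexity intends. Adding the two inequalities gives
\[
\tfrac{\gamma_{min}}{2}\cdot 2\|\hat x-\hat x_\Phi\|^2\le [\cV(\hat x_\Phi)-\cV_\Phi(\hat x_\Phi)]+[\cV_\Phi(\hat x)-\cV(\hat x)]\le 2c(t).
\]
This yields $\|\hat x-\hat x_\Phi\|\le\sqrt{2c/\gamma_{min}}\le 2\sqrt{c/\gamma_{min}}$. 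The main obstacle is not this step but the first one: making the running-cost estimate produce exactly the constant in $c(t)$, and justifying the sup bounds, particularly $\delta(t)$ taken as a supremum over $x$.
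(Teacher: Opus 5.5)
Your route is the paper's: a pointwise bound on the difference of the Hamiltonians (i.e.\ of the output penalties) via $y-C\Phi=(y-h)+\Delta$, then a transfer to $|\cV-\cV_\Phi|$, then a comparison of minimizers by quadratic growth. Your versions of the last two steps are better than the paper's.

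\textbf{Transfer step.} The paper's transfer is the line $|\cV-\cV_\Phi|=|\partial_t(\cH_\Phi-\cH)|$, which is not an argument. Comparing the two costs trajectory by trajectory and taking infima (or noting that $\cV_\Phi\pm c(t)$ are super- and subsolutions) is the rigorous version. This uses the identification of $\cV_\Phi$ as the value function of the problem with output $C\Phi$, which Theorem~\ref{theorem1} asserts.

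\textbf{Minimizer step.} The paper chains the sandwich to get $\cV(\hat x_\Phi)\le\cV(\hat x)+2c(t)$ and then uses the growth of $\cV$ alone; its triangle-inequality line amounts to taking $x=\hat x_\Phi$. That gives $2\sqrt{c/\gamma}$. Your symmetric sum uses both growth constants and gives the sharper $\sqrt{2c/\gamma_{min}}$. The paper's proof also uses the squared norm in the growth condition, as you do.

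\textbf{The constant.} Your concern here is not harmless bookkeeping; it is a genuine defect of the statement, and the paper's proof contains the same slip. The paper derives $|\cH-\cH_\Phi|\le\lambda_{max}(Q^{-1})\,\delta_h\bigl(\frac{\delta_h}{2}+\delta(t)\bigr)$, exactly your estimate, and then drops the factor $\delta_h$ when integrating.

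``Normalizing'' cannot fix this. Replace $y,h$ by $ky,kh$ and $Q$ by $k^2Q$; then $C\mapsto kC$. This leaves $\ell$ and $\ell_\Phi$ unchanged, hence also $\cV$, $\cV_\Phi$, $\hat x$ and $\hat x_\Phi$. Meanwhile $\delta_h\mapsto k\delta_h$ and $\delta\mapsto k\delta$, so the claimed $c(t)$ scales like $1/k$. Whenever $\cV\ne\cV_\Phi$ somewhere and the suprema are finite, the literal bound therefore fails for large $k$.

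What your argument proves is the scale-invariant bound
\[
|\cV-\cV_\Phi|\le\lambda_{max}(Q^{-1})\,\delta_h\int_0^t\Bigl(\frac{\delta_h}{2}+\delta(\tau)\Bigr)d\tau ,
\]
and the estimate bound then holds with this $c(t)$. The stated form follows only under the extra hypothesis $\delta_h\le 1$. Present the result that way rather than as a harmless adjustment.
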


\begin{proof}
We know that the optimal value function $\cV$ and $\cV_\Phi$ (\ref{optimalvaluefunction_Koopman}) satisfy following PDEs
\begin{align}\frac{\partial \cV}{\partial t}+\frac{\partial \cV}{\partial x}f(x)+\frac{1}{2}\frac{\partial \cV}{\partial x}R_1^{-1}(x)\frac{\partial \cV}{\partial x}^\top\nonumber\\ -\frac{1}{2}(y(t)-h(x))Q^{-1}(y(t)-h(x))
\end{align}

\begin{align}\frac{\partial \cV_\Phi}{\partial t}+\frac{\partial \cV_\Phi}{\partial x}f(x)+\frac{1}{2}\frac{\partial \cV_\Phi}{\partial x}R_1^{-1}(x)\frac{\partial \cV_\Phi}{\partial x}^\top \nonumber\\-\frac{1}{2}(y(t)-C\Phi) Q^{-1}(y(t)-C\Phi(x))
\end{align}

% \[\frac{\partial \cV}{\partial t}+\cH(x,\nabla_x \cV,t),\;\;\frac{\partial \cV_\Phi}{\partial t}+\cH_\Phi(x,\nabla_x \cV_\Phi,t)\]
% where the Hamiltonian $\cH$ is as given in (\ref{hamiltonian}) and the $\cH_\Phi$ is defined as follows
% \begin{align}\cH_\Phi=p^\top f+\frac{1}{2}p^\top R_1^{-1}(x)p\nonumber\\-\frac{1}{2}(y(t) - C\Phi(x))^\top Q^{-1} (y(t) - C\Phi(x))
% \end{align}
Let 
\[r(x)=h(x)-C\Phi(x)\implies C\Phi=h-r\]
We have

\begin{align}
\cH-\cH_\Phi&=-\frac{1}{2}(y-h)^\top Q^{-1}(y-h)\nonumber\\&+(y-C\Phi)^\top Q^{-1}(y-C\Phi)
\end{align}

% \begin{align}\cH-\cH_\Phi=-\frac{1}{2}\left((y-h)^\top Q^{-1}(y-h)\\+(y-C\Phi)^\top Q^{-1}(y-C\Phi)\right)
%  \end{align}

% \[\frac{1}{2}\left((y-h)^\top Q^{-1}(y-h)-(y-h+r)^\top Q^{-1}(y-h+r)\right)\]

\[|\cH-\cH_\Phi|\leq |(y-h)^\top Q^{-1}r|+\frac{1}{2}rQ^{-1}r\]

Following Assumption \ref{assumption_approximation}, this is equal to 

\[|\cH-\cH_\Phi|\leq \lambda_{max}(Q^{-1})\delta_h\left(\frac{\delta_h}{2}+\delta(t)\right) \]

Hence 
\[|\cV-\cV_\Phi|=\left|\frac{\partial }{\partial t}(\cH_\Phi-\cH)\right|\leq  \lambda_{max}(Q^{-1})\int_{0}^t \left(\frac{\delta_h}{2}+\delta(\tau)\right)d\tau \]

It is known that the optimal maximum likelihood estimate, $\hat x(t)$, is obtained from the boundary condition $p(t)=0$ i.e., where the gradient of optimal value function $\nabla_x\cV(x,t)=0$ as $p(t)=\nabla_x \cV(x,t)$ (\ref{optimalcondition}). Hence, the error bounds between the true maximum likelihood estimate obtained from $\cV$.  and the approximate one  from $\cV_\Phi$ can be obtained as follows. 
\begin{align}
\cV(\hat x_\Phi(t),t) \leq \cV_\Phi(\hat x_\Phi(t),t)+c(t) \leq \cV_\Phi(\hat x(t),t)+c(t)\nonumber\\ \leq \cV(\hat x(t),t)+2c(t).
\end{align}

In the above we have use the fact that  $|\cV-\cV_\Phi|\leq c(t)$ and $\hat x_\Phi$ is the minimizer of $\cV_\Phi$. 
Similarly, we have inequality in the other direction as
\begin{align}
\cV_\Phi(\hat x(t),t) \leq \cV(\hat x(t),t)+c(t) \leq \cV(\hat x_\Phi(t),t)+c(t)\nonumber\\ \leq \cV_\Phi(\hat x_\Phi(t),t)+2c(t).
\end{align}

Since $\cV$ is strongly convex following Assumption \ref{assumption_approximation}, we have
\[\cV(x,t)\geq \cV(\hat x(t),t)+\frac{\gamma}{2}\|\hat x(t)-x\|^2\]

\[\cV_\Phi(x,t)\geq \cV_\Phi(\hat x(t),t)+\frac{\gamma_\Phi}{2}\|\hat x_\Phi(t)-x\|^2\]
Hence,

\[\|\hat x_\Phi(t)-\hat x(t)\|\leq \|\hat x_\Phi(t)-x\|+\|\hat x(t)- x\| \]
\[\leq\sqrt{ \frac{2}{\gamma_\Phi}}|\cV_\Phi(x,t)-\cV_\Phi(\hat x_\Phi(t),t)|^{\frac{1}{2}}+\sqrt{\frac{2}{\gamma}}|\cV(x,t)-\cV(\hat x(t),t)|^{\frac{1}{2}}\]
\[=2\sqrt{\frac{c(t)}{\gamma_{min}}}\]

where $\gamma_{min}=\min\{\gamma,\gamma_\Phi\}$. 
\qed
\end{proof}

\begin{remark}
The curvature constants, denoted by $\gamma_1$ and $\gamma_\Phi$, can be directly computed as the minimum eigenvalues of the Hessian matrices associated with the value functions $\cV$
 and $\cV_\Phi$ respectively. Furthermore, the constant $\delta_h$ used in Assumption \ref{assumption_approximation} will decrease as we use higher order basis function in the approximation of the output mapping $h(x)$ as shown in our next Theorem \ref{thm:lifted_bounds}.  
 \end{remark}

 \subsection{KMD and Lifting of Output}\label{section_KMD}
The principal eigenfunctions of the Koopman operator satisfy the algebra property stated in Property~\ref{property1}. This algebraic structure can be exploited to construct improved projections of the output map $
h(x)$ in cases where 
$h(x)$ does not lie in the span of the principal eigenfunctions alone i.e., when Assumption~\ref{assume_span} is not satisfied. To this end, we define multi-index $\alpha = (\alpha_1,\ldots,\alpha_n)\in\mathbb{N}^n$, define

\[
\phi^\alpha(x) := \prod_{i=1}^n \phi_i(x)^{\,\alpha_i}.
\]

Assume that each lifted eigenfunction $\phi^\alpha(x)$ belongs to $\mathcal{C}^1(\mathbb{R}^n)$.  
Let $\mathcal{A}\subset \mathbb{N}^n$ denote the collection of multi-indices used in the approximation (e.g., all $\alpha$ with $|\alpha|\le k$).  
We define the lifted principal--eigenfunction vector
\[
\bar{\Phi}(x) := \big(\phi^\alpha(x)\big)_{\alpha\in\mathcal{A}}
\in \mathbb{R}^N,
\]
where $N = |\mathcal{A}|$ is the cardinality of $\mathcal{A}$.  
The over-bar notation indicates an augmented set of Koopman eigenfunctions.  
The Koopman eigenvalue associated with $\phi^\alpha(x)$ is 
\[
\sum_{i=1}^n \alpha_i \lambda_i,
\]
where $\lambda_i$ is the eigenvalue corresponding to the principal eigenfunction $\phi_i(x)$.  
Both $\bar{\Phi}(x)$ and $\bar{\Lambda}$ admit the block structure
\begin{align}
\bar{\Phi}(x)=
\begin{pmatrix}
\Phi(x)\\[3pt]
q_1(\Phi(x))
\end{pmatrix},
\qquad
\bar{\Lambda}=
\begin{pmatrix}
\Lambda & 0\\[3pt]
0 & q_2(\Lambda)
\end{pmatrix},
\label{bardefinition}
\end{align}
where $q_1:\mathbb{R}^n\to\mathbb{R}^{N-n}$ and  
$q_2:\mathbb{R}^n\to\mathbb{R}^{(N-n)\times(N-n)}$,  
assuming that $\Phi$ and $\Lambda$ are in real canonical form.

Instead of expanding the output observable $h(x)$ solely in terms of the principal eigenfunctions $\Phi(x)$, we may perform a Koopman Mode Decomposition (Definition~\ref{definition_kmd}) of $h(x)$ using the lifted eigenfunctions $\bar{\Phi}(x)$.  
Repeating the least-squares optimization steps in \eqref{optimization_prob}--\eqref{Gb_definition} with $\Phi(x)$ replaced by $\bar{\Phi}(x)$ yields coefficients $\bar{c}_i\in\mathbb{R}^N$.  
Since $\bar{\Phi}(x)$ contains $\Phi(x)$ as a subset (Eq.\ \eqref{bardefinition}), the projection error satisfies
\[
\left\langle h-\bar{c}_i^\top \bar{\Phi},\,h-\bar{c}_i^\top \bar{\Phi}\right\rangle
\;\le\;
\left\langle h-c_i^\top \Phi,\,h-c_i^\top \Phi\right\rangle.
\]

The main results of this section show that the optimal value function $\mathcal{V}(x,t)$, defined as the solution of the Hamilton--Jacobi equation \eqref{HJequationPhi}, admits computable upper and lower bounds in terms of quadratically parameterized value functions expressed in the lifted eigenfunction coordinates. We make following assumption.
\begin{assumption}[KMD of the output]\label{assume_spanhigher} We assume that the output mapping $h(x)$ lies in the span of the lifted principal eigenfunctions i.e., there exists a matrix $\bar C\in \mR^{N\times p}$ such that 
\begin{align}
h(x)=\bar C\bar \Phi
\end{align}
\end{assumption}
\begin{remark} If this assumption is not satisfied then the results along the lines of Theorem \ref{theorem2} can be proved for this case of lifted eigenfunctions. 
\end{remark}
To prove the next main results of this paper, we derive the following bounds on the following matrix function of $x$ of interest
\begin{align}
\bar R(x)
:=\frac{\partial \bar{\Phi}}{\partial x}R_1(x)^{-1}\frac{\partial \bar{\Phi}}{\partial x}^\top =
\frac{\partial \bar{\Phi}}{\partial x}
\left(
\frac{\partial \Phi}{\partial x}^\top
R^{-1}
\frac{\partial \Phi}{\partial x}
\right)^{-1}
\frac{\partial \bar{\Phi}}{\partial x}^\top\label{definitionbarR}
\end{align}
Using the structure of lifted eigenfunction $\bar \Phi(x)$ in (\ref{bardefinition}), we have
\[
\frac{\partial \bar\Phi}{\partial x}(x)
=
\begin{pmatrix}
I_n \\[3pt] J_q(\Phi(x))
\end{pmatrix}
J_\Phi(x),
\]
where $J_\Phi(x)=\frac{\partial \Phi}{\partial x}$ and  $J_q(z)=\tfrac{\partial q}{\partial z}(z)$.
Define
\[
B(x)
:= 
J_{\bar\Phi}(x)\,J_\Phi(x)^{-1}
=
\begin{pmatrix}
I_n \\[3pt]
J_q(\Phi(x))
\end{pmatrix}
\in\mathbb{R}^{N\times n}.
\]
For a symmetric positive definite matrix $R\succ 0$, the matrix of interest can be written as
\[
\bar R(x)
=
\frac{\partial \bar\Phi}{\partial x}
\Big(\frac{\partial \Phi}{\partial x}\Big)^{-1}
R
\Big(\frac{\partial \Phi}{\partial x}\Big)^{-T}
\Big(\frac{\partial \bar\Phi}{\partial x}\Big)^{\!\top}
=
B(x)\,R\,B(x)^{\!\top}.
\]

Since for any $w\in\mathbb{R}^n$,
\[
\|B(x)w\|^2
= 
\|w\|^2 + \|J_q(\Phi(x))\,w\|^2
\ge \|w\|^2,
\]
the smallest singular value of $B(x)$ satisfies
\[
\sigma_{\min}(B(x)) \ge 1.
\]
Similarly,
\[
\|B(x)w\|^2 
\le \big(1 + \|J_q(\Phi(x))\|^2\big)\,\|w\|^2,
\]
which implies
\[
\sigma_{\max}(B(x))^2 
\le 1 + \|J_q(\Phi(x))\|^2.
\]

Let $0<\lambda_{\min}(R)\le \lambda_{\max}(R)$ denote the extremal eigenvalues of $R$.
Using the standard bounds for $B R B^\top$, we obtain the following matrix inequality
\[
\lambda_{\min}(R)\,\sigma_{\min}(B(x))^2\,I_N
\;\preceq\;
\bar R(x)
\;\preceq\;
\lambda_{\max}(R)\,\sigma_{\max}(B(x))^2\,I_N.
\]
Substituting the bounds on the singular values of $B(x)$ yields the estimate
\[
\lambda_{\min}(R)\,I_N
\;\preceq\;
\bar R(x)
\;\preceq\;
\lambda_{\max}(R)\,\big(1 + \|J_q(\Phi(x))\|^2\big)\,I_N.
\]

% Let $\|J_q(\Phi(x))\|\le L$ on a set $\Omega\subseteq \mR^n$, then we get  the following uniform bounds
% \[
% \lambda_{\min}(R)\,I_N
% \;\preceq\;
% \bar R(x)
% \;\preceq\;
% \lambda_{\max}(R)\,(1 + L^2)\,I_N.
% \]
The bounds from the above discussion can be summarized in the following assumption. 
\begin{assumption}\label{assumption_boundsbarR} We assume that the 
satisfies the uniform bounds
\begin{equation}
\label{eq:R_bounds_bar}
\bar R_{-}
\;\leq\;
\bar R(x)
\;\leq\;
\bar R_{+},
\end{equation}
\begin{equation}
\label{eq:Rpm_def}
\bar R_{+} = \lambda_{\max}(R)\,(1+L^2)\,I_{N},
\qquad
\bar R_{-} = \lambda_{\min}(R)\,I_{N},
\end{equation}
for some constant $L \ge 0$ such that
\begin{equation}
\label{eq:q1_grad_bound}
\left\|
\frac{\partial q_1(\Phi(x))}{\partial x}
\right\|
\leq L
\quad\text{for all } x\in\mathbb{R}^n.
\end{equation}
\end{assumption}
\begin{theorem}
\label{thm:lifted_bounds}
Let $\mathcal{V}(x,t)$ denote the optimal value function associated with the Hamilton--Jacobi equation (\ref{HJequationPhi}). Let $\bar \Phi(x)$ be the lifted eigenfunctions  as constructed in~\eqref{bardefinition}. We 
assume that:
\begin{enumerate}[]
\item Each lifted eigenfunction $\phi^\alpha(x)$ belongs to $\mathcal{C}^1(\mathbb{R}^n)$.
\item The matrix $\bar R(x)$ from Eq. (\ref{definitionbarR}) satisfies Assumption \ref{assumption_boundsbarR}
% \[
% \bar R(x)
% :=
% \frac{\partial \bar{\Phi}}{\partial x}
% \left(
% \frac{\partial \Phi}{\partial x}^\top
% R^{-1}
% \frac{\partial \Phi}{\partial x}
% \right)^{-1}
% \frac{\partial \bar{\Phi}}{\partial x}^\top
% \]

\end{enumerate}
% where $q_1$ is defined in (\ref{bardefinition}). 
Define the lifted, quadratically parameterized value functions
\begin{equation}
\label{eq:Vpm_bar_def}
\mathcal{V}_{\bar{\Phi}}^{\pm}(x,t)
=
\frac{1}{2}\bar{\Phi}(x)^\top P^{\pm}(t)\,\bar{\Phi}(x)
+ s^{\pm}(t)^\top \bar{\Phi}(x)
+ r^{\pm}(t),
\end{equation}
where the coefficient triples $(P^{\pm}(t),s^{\pm}(t),r^{\pm}(t))$ solve the Riccati-type systems
\begin{align}
\dot{\bar P}^{\pm}(t)
&+
\bar{\Lambda}^\top \bar P^{\pm}(t)
+
\bar P^{\pm}(t)\,\bar{\Lambda}
+
\bar P^{\pm}(t)\,\bar R_{\pm}\,\bar P^{\pm}(t)
\nonumber\\&-
\bar C^\top Q^{-1}\bar C
=0,
\label{eq:Ppm_ode}
\\
\dot{\bar s}^{\pm}(t)
&+
\bar{\Lambda}^\top \bar s^{\pm}(t)
+
\bar P^{\pm}(t)\,\bar R_{\pm}\,\bar s^{\pm}(t)
+
\bar C^\top Q^{-1} y(t)
=0, \label{eq:spm_ode} \\
\dot{\bar r}^{\pm}(t)
&+
\frac{1}{2}(\bar s^{\pm}(t))^\top\,\bar R_{\pm}\,\bar s^{\pm}(t)
-
\frac{1}{2}y^\top(t) Q^{-1} y(t)
=0.
\label{eq:rpm_ode}
\end{align}

Then, for initial conditions $\cV^+_{\bar \Phi}(x,0)=\cV^-_{\bar \Phi}(x,0)=\cV(x,0)$, the functions $\mathcal{V}_{\bar{\Phi}}^{-}(x,t)$ and $\mathcal{V}_{\bar{\Phi}}^{+}(x,t)$ satisfy the sandwich bounds
\begin{equation}
\label{eq:V_sandwich}
\mathcal{V}_{\bar{\Phi}}^{+}(x,t)
\;\leq\;
\mathcal{V}(x,t)
\;\leq\;
\mathcal{V}_{\bar{\Phi}}^{-}(x,t),
\qquad \forall\,x\in\mathbb{R}^n,\; t\in[0,T],
\end{equation}
where the inequalities are understood pointwise.
\end{theorem}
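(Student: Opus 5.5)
The plan is a comparison-principle argument for viscosity solutions of the HJ equation (\ref{HJequationPhi}). For each sign I substitute $\mathcal{V}_{\bar\Phi}^{\pm}$ into the Hamiltonian $H$ and show that one function is a classical supersolution and the other a classical subsolution, both with the initial data $\cV(\cdot,0)$. The comparison principle for viscosity solutions of convex (quadratic-in-gradient) Hamiltonians (Assumption~\ref{assumption_HJviscositysolution}, \cite{CrandallIshiiLions1992}) then orders them relative to $\cV$.

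\textbf{Step 1: the residual.} Write $p^\pm(x,t):=\bar P^\pm\bar\Phi+\bar s^\pm\in\mR^N$. Then $\nabla_x\mathcal V^\pm_{\bar\Phi}=\frac{\partial\bar\Phi}{\partial x}^\top p^\pm$. By the eigenfunction relation $\frac{\partial\bar\Phi}{\partial x}f=\bar\Lambda\bar\Phi$ (Property~\ref{property1} and (\ref{bardefinition})), the drift term is
\[
(p^\pm)^\top\bar\Lambda\bar\Phi .
\]
By the definition (\ref{definitionbarR}), the quadratic gradient term is
\[
\tfrac12 (p^\pm)^\top\bar R(x)p^\pm .
\]
Assumption~\ref{assume_spanhigher} gives $h=\bar C\bar\Phi$. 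Collecting powers of $\bar\Phi$ exactly as in the proof of Theorem~\ref{theorem1}, and using (\ref{eq:Ppm_ode})--(\ref{eq:rpm_ode}), every term cancels except the discrepancy between $\bar R(x)$ and the constant $\bar R_\pm$:
\[
\partial_t\mathcal V^\pm_{\bar\Phi}+H(x,\nabla_x\mathcal V^\pm_{\bar\Phi},t)=\tfrac12 (p^\pm)^\top\big(\bar R(x)-\bar R_\pm\big)p^\pm .
\]

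\textbf{Step 2: signs.} By Assumption~\ref{assumption_boundsbarR}, the right-hand side is $\le 0$ for the $+$ choice and $\ge0$ for the $-$ choice. Hence $\mathcal V^+_{\bar\Phi}$ is a subsolution and $\mathcal V^-_{\bar\Phi}$ is a supersolution. Both agree with $\cV$ at $t=0$, and $\bar\Phi\in\mathcal C^1$ makes them classical, hence viscosity, sub- and supersolutions. Comparison then yields $\mathcal V^+_{\bar\Phi}\le\cV\le\mathcal V^-_{\bar\Phi}$. The sign convention must be checked: with $\partial_t\cV+H=0$ posed forward in time from initial data, a subsolution satisfies $\partial_tw+H\le0$ and lies below the solution, which matches (\ref{eq:V_sandwich}). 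Alternatively, the same order follows directly from the control representation (\ref{optimalvaluefunction_Koopman}), since a larger control penalty metric gives a larger cost.

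\textbf{Main obstacle.} The hard step is justifying comparison on all of $\mR^n$ with unbounded, quadratically growing data. This needs growth conditions on $\bar\Phi$ and $\bar R(x)$, and uniform-in-$x$ bounds, for which the constant $L$ is used. A second difficulty is that the initial condition must be representable in the lifted quadratic form, which requires choosing $\bar P^\pm(0)=\mathrm{diag}(\Sigma^{-1},0)$ and similar. I would first handle comparison by restricting to sub- and supersolutions with at most quadratic growth in $\bar\Phi$ and invoking the standard comparison theorem for that class. Failing that, I would argue via the verification/optimal-control representation: compare costs along any admissible trajectory using $\bar R_-\preceq\bar R(x)\preceq\bar R_+$, completing the square in the control.
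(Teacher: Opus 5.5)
Your argument is the same as the paper's: substitute $\cV^{\pm}_{\bar\Phi}$ into (\ref{HJequationPhi}), use $h=\bar C\bar\Phi$, $\frac{\partial\bar\Phi}{\partial x}f=\bar\Lambda\bar\Phi$ and the Riccati equations so that only the residual $\tfrac12(p^\pm)^\top(\bar R(x)-\bar R_\pm)p^\pm$ survives, read off the sub- and supersolution properties from Assumption~\ref{assumption_boundsbarR}, and conclude by comparison with matching initial data (your explicit residual, the choice $\bar P^\pm(0)=\mathrm{diag}(\Sigma^{-1},0)$, and the growth caveat for comparison on $\mR^n$ are more careful than the paper, which just cites Crandall--Ishii--Lions). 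One caveat you inherit from the paper: $\bar R(x)=B(x)RB(x)^\top$ has rank at most $n$, so the lower bound $\bar R(x)\succeq\lambda_{\min}(R)I_N$ can never hold when $N>n$; hence only $\cV^+_{\bar\Phi}\le\cV$, which needs just $\bar R(x)\preceq\bar R_+$, is proved under a satisfiable hypothesis, and your control-representation shortcut cannot rescue $\cV\le\cV^-_{\bar\Phi}$, because the lifted LQ problem may steer $z\in\mR^N$ off the image $\bar\Phi(\mR^n)$ and is therefore a relaxation that can only furnish lower bounds.
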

\begin{proof}
The proof proceeds by constructing sub and supersolutions of the Hamilton--Jacobi equation~\eqref{HJequationPhi} in the lifted eigenfunction coordinates and then invoking a comparison principle. We will only prove the inequality $\cV\geq \cV_{\bar \Phi}^+$, the inequality in the other direction follows along similar lines. Substituting the expression of $\cV_{\bar \Phi}^+$ in the HJ equation (\ref{HJequationPhi}) and using Assumption \ref{assume_spanhigher} along with bounds in (\ref{eq:R_bounds_bar}) we obtain
\[\frac{\partial \cV^+_{\bar \Phi}}{\partial t}+H(x,\nabla_x \cV_{\bar \Phi}^+,t)=\frac{1}{2}\bar \Phi^\top \dot P^+\bar\Phi+\dot \bar s^+(t)^\top \bar \Phi+\dot r^+\]
\[\bar \Phi^\top \bar P^+\bar \Lambda \bar \Phi+\bar \Phi^\top\bar \Lambda^\top \bar P^+\bar \Phi+\bar s^+(t)^\top \bar \Lambda \bar \Phi\]\[ +\frac{1}{2}\left(\bar \Phi^\top \bar P^++\bar s^\top\right)\frac{\partial \bar \Phi}{\partial x}R_1(x)^{-1}\frac{\partial \bar \Phi}{\partial x}^\top \left(\bar \Phi^\top \bar P^++\bar s^\top\right)^\top \]
\[-\frac{1}{2}(y-\bar C\bar \Phi)^\top Q^{-1}(y-\bar C\bar \Phi)\]
\[\leq  \frac{1}{2}\bar \Phi^\top \dot P^+\bar\Phi+\dot \bar s^+(t)^\top \bar \Phi+\dot r^++\bar \Phi^\top \bar P^+\bar \Lambda \bar \Phi+\bar \Phi^\top\bar \Lambda^\top \bar P^+\bar \Phi\]
\[+\bar s^+(t)^\top \bar \Lambda \bar \Phi +\frac{1}{2}\left(\bar \Phi^\top \bar P^++\bar s^\top\right)\bar R_+ \left(\bar \Phi^\top \bar P^++\bar s^\top\right)^\top \]
\[-\frac{1}{2}(y-\bar C\bar \Phi)^\top Q^{-1}(y-\bar C\bar \Phi)=0\]
The above equality eqaul to zero follows because $(\bar P^+,\bar s^+, \bar r^+)$ satisfy the Riccatti equation (\ref{eq:rpm_ode}). Hence we get,
\[\frac{\partial \cV^+_{\bar \Phi}}{\partial t}+H(x,\nabla_x \cV_{\bar \Phi}^+,t)\leq 0 \]
The results, $\cV_{\bar \Phi}^+(x,t)\leq \cV(x,t)$, then follows using comparison principle as $\cV^+_{\bar \Phi}(x,t)$ is the viscosity sub-solution and $\cV(x,t)$ is the assumed viscosity solution (Assumption \ref{assumption_HJviscositysolution}) and  $\cV_{\bar \Phi}^+(x,0)= \cV(x,0)$\cite{CrandallIshiiLions1992}. The proof for $\cV_{\bar \Phi}^-(x,t)\geq \cV(x,t)$, will follow along similar lines. 
\end{proof}

\subsection{ Relation to the posterior density, Zakai/Mortensen viewpoints, and the Kalman--Bucy filter}

The optimal value function $\mathcal V_\Phi(x,t)$ obtained from the
Hamilton--Jacobi (HJ) formulation admits an immediate probabilistic
interpretation. In particular, it induces an \emph{unnormalized} posterior
density of the form
\[
\bar p(x,t \mid y_{[0,t]}) \;\propto\; \exp\!\big(-\mathcal V_\Phi(x,t)\big).
\]
Consequently, the KBK estimate returned by the proposed filter is a
maximum-a-posteriori (MAP) estimate,
\[
\hat x(t)=\arg\min_x \mathcal V_\Phi(x,t)
       =\arg\max_x \bar p(x,t\mid y_{[0,t]}),
\]
which coincides with the maximum-likelihood/modal trajectory estimate
associated with the Mortensen minimum-energy formulation. Normalizing
$\bar p$ yields the (approximate) posterior density
\[
p(x,t)=\frac{\bar p(x,t)}{\int_{\mathbb R^n}\bar p(x,t)\,dx}.
\]

This perspective also clarifies the relationship between classical density-based
filters and the proposed spectral approach. The Zakai/Kushner--Stratonovich
filters propagate the (unnormalized/normalized) posterior density directly
through a (stochastic) Fokker--Planck evolution, whereas the Mortensen/HJ
formulation propagates the negative log-density (value function) whose minimizer
yields the MAP estimate. The proposed KBK filter leverages Koopman spectral
coordinates specifically, principal Koopman eigenfunctions to parameterize
this value function in a tractable quadratic form, thereby producing a
finite-dimensional, Riccati-type evolution for nonlinear estimation.

Finally, the proposed KBK filter reduces to the classical Kalman--Bucy filter
for linear time-invariant (LTI) systems. For $\dot x=Ax+w$ and $y=Cx+v$, the
principal Koopman eigenfunctions are available analytically as
$\Phi(x)=W^\top x$, where $W^\top A=\Lambda W^\top$. In this case, the KBK
filter equations in (\ref{estimator}) coincide with the Kalman--Bucy equations
expressed in the transformed coordinates $\Phi$.

\subsection{Bias and Cramér-Rao considerations.}
The proposed Kalman--Bucy--Koopman (KBK) filter is formulated through the
Mortensen minimum–energy principle and therefore computes a
maximum–a–posteriori (MAP) state estimate by minimizing an approximate
Hamilton--Jacobi value function
$V(x,t) \approx -\log p(x\mid \mathcal Y_t)$.
Consequently, the estimator targets the \emph{mode} of the posterior
distribution rather than its conditional mean.
Unlike minimum–mean–square–error (MMSE) estimators, MAP estimators are
generally biased whenever the posterior density is non–Gaussian or
asymmetric, which is typical for nonlinear dynamical systems.
Accordingly, unbiasedness and classical Cramér-Rao lower bound (CRLB)
optimality are not expected in general, since the CRLB applies primarily to
unbiased estimators of the conditional mean.

Instead, the KBK filter should be interpreted as a deterministic
value-function approximation to the Mortensen optimal control problem,
whose accuracy is governed by the quality of the Koopman spectral
approximation.  In regimes where the posterior is approximately Gaussian
(e.g., small noise or near-linear dynamics), the posterior mean and mode
coincide, and the KBK estimate empirically approaches MMSE performance,
often yielding error covariances close to the posterior Cramér-Rao bound.
% More generally, uncertainty can be characterized through the local curvature
% of the value function via a Laplace approximation,
% $P(t) \approx [\nabla^2 V(\hat x(t),t)]^{-1}$, which provides a principled
% measure of estimator confidence.  

\section{Computation of the Koopman Eigenfunctions}

For the propagation of maximum-likelihood estimates, it is essential to obtain the Hessian information of the value function, which in turn depends on the principal eigenfunctions of the Koopman operator. In \cite{vaidya2025koopman}, we have shown that these principal eigenfunctions satisfy a linear partial differential equation (PDE) and proposed a Galerkin projection–based method for their finite-dimensional approximation. Algorithmically, the computation of the principal Koopman spectrum can be formulated as a least-squares optimization problem. However, similar to the finite-dimensional approximation of the Koopman operator itself, a key challenge in this procedure lies in the selection of an appropriate set of basis functions. The accuracy and efficiency of the resulting approximation are highly sensitive to this choice, making the design of data-efficient and adaptive basis selection strategies an important open problem.

In what follows, we present two computational approaches for approximating the principal eigenfunctions, both grounded in a method-of-characteristics–based solution of the linear PDE satisfied by the eigenfunctions. The first approach, presented in Subsection~\ref{section_pathintegral}, derives an explicit path-integral representation for the eigenfunctions. This representation reveals that the principal eigenfunctions can be computed directly from system trajectories, without requiring discretization of the associated PDE over the state space. Consequently, the computation reduces to trajectory-based integration along characteristic curves, making the approach inherently compatible with data-driven implementation and scalable to higher-dimensional systems. The second approach, described in Subsection~\ref{section_basisPI}, introduces a dynamics-informed basis construction motivated by the method of characteristics, enabling efficient eigenfunction approximation within a finite-dimensional function space while retaining alignment with the underlying flow geometry.

\subsection{Path-Integral Approach for Eigenfunction Computation}\label{section_pathintegral}
% The following assumption is introduced on the system dynamics to facilitate the computation of the Koopman principal spectrum.
% \begin{assumption}\label{assumption_hyperbolic} We assume that the system dynamics $\dot x=f(x)$ has a hyperbolic equilibrium point which is assumed to be at the origin.
% \end{assumption}

Following Assumption \ref{assume_peig}, the nonlinear system admits a decomposition into linear and nonlinear parts as follows.
\begin{align}
\dot x=f(x)=Ax+F_n(x)\label{system}
\end{align}

The principal eigenfunction of the Koopman operator admits the decomposition of the form 
\[(\phi_{\lambda_1},\ldots,\phi_{\lambda_n})^\top=\Phi(x)=W^\top x+H(x)\]
where $W^\top x$ is the linear and $H(x)=(h_{\lambda_1}(x),\ldots, h_{\lambda_n}(x))\in \mR^n$ is the purely nonlinear part and hence satisfies $\frac{\partial H}{\partial x}(0)=0$. The individual eigenfunction admits a decomposition into linear and nonlinear parts as 
\[\phi_\lambda(x)=w_\lambda^\top x+h_\lambda(x)\]
Substituting the above expression of the eigenfunction in $\frac{\partial \phi_\lambda}{\partial x}f(x)=\lambda \phi_\lambda(x)$, we obtain after comparing the linear and nonlinear parts that $w_\lambda$ and $h_\lambda$ should satisfy
% $W^\top$ satisfies
% \[W^\top A=\Lambda W^\top\]
% where $\Lambda$ is the matrix of eigenvalues of $A$. Let, $h_\lambda(x)$, be the nonlinear part of the eigenfunction  associated with the eigenvalue $\lambda$  assumed to be real. We will comment on the case of the complex eigenvalue later. 
% The nonlinear part $h_\lambda(x)$ satisfies following linear PDE.
\begin{align}
w_\lambda^\top A=\lambda w_\lambda^\top,\;\;\;\frac{\partial h_\lambda}{\partial x}f(x)-\lambda h_\lambda(x)+w_\lambda^\top F_n(x)=0\label{linearPDE}
\end{align}
So $w_\lambda^\top$ is the left eigenvector of $A$ corresponding to the eigenvalue $\lambda$. The solution to this linear PDE is given by the method of characteristics and is of the form
\begin{align}
h_\lambda(x)=e^{-\lambda t}h_\lambda(s_t(x))+\int_0^t e^{-\lambda \tau}w_\lambda^\top F_n(s_\tau(x))d\tau\label{methodofcharacteristicsformula}
\end{align}
% We observe that the nonlinear component of the principal eigenfunction satisfies a linear partial differential equation (PDE). Several methodologies can be employed to approximate this eigenfunction. In \cite{vaidya2025koopman}, a Galerkin projection framework was developed to approximate the solution of the linear PDE within a finite-dimensional subspace. In \cite{deka2023path}, an alternative path-integral formulation was introduced, derived from the method of characteristics, to approximate the principal eigenfunction. Under the assumption that the equilibrium point at the origin of system~(\ref{system}) is asymptotically stable, an explicit expression for the principal eigenfunction can be obtained, as established in Theorem~\ref{theorem_pi}, which generalizes the results reported in \cite{deka2023path}.

The path-integral formulation is particularly attractive, as it eliminates the need for basis-function expansions. Furthermore, in the propagation of maximum-likelihood estimates (refer to the dynamics of the KBK filter), the local behavior of the principal eigenfunction along the estimated trajectory~$\hat{x}(t)$ is of primary interest. The path-integral representation is well suited for this purpose, enabling local computation of the eigenfunction without significant computational overhead. Although the expression for the nonlinear component of the eigenfunction in Eq.~(\ref{methodofcharacteristicsformula}) remains implicit—requiring knowledge of the boundary condition—the following theorem provides an explicit formula for the computation of the principal eigenfunction under the stated stability assumption. The following theorem generalizes the results from \cite{deka2023path} for the computation of principal eigenfunctions.

\begin{theorem}\label{theorem_pi} For the dynamical system (\ref{system}) satisfying Assumption \ref{assume_peig}, let the equilibrium point at the origin be asymptotically stable with domain of attraction  $\cD$. Let the nonlinear terms $F_n(x)$ contains nonlinearity of order $\ell$ i.e., 
\begin{align}F_n(x)\approx O(\|x\|^\ell).\label{nonlineartermorder}
\end{align}
where $\ell\geq 2$. If the eigenvalue, $\lambda$, at the linearization satisfy 
\begin{align}
-{\rm Re}(\lambda)+\ell{\rm Re}(\lambda_{max})<0\label{eigenspread}
\end{align}
where $\lambda_{max}$ is the eigenvalue of $A$ with maximum real part. Then the principal eigenfunction corresponding to the eigenvalue $\lambda$ is given by
\begin{align}
\phi_\lambda(x)=w_\lambda^\top x+\int_0^\infty e^{-\lambda t}w_\lambda^\top F_n(s_t(x))dt\label{formulaeigenfunction}
\end{align}
for all $x\in \cD$. 
\end{theorem}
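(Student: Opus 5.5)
The plan is to start from the method-of-characteristics identity (\ref{methodofcharacteristicsformula}) for the nonlinear part $h_\lambda$, which holds for every finite $t$ along the trajectory $s_t(x)$, and pass to the limit $t\to\infty$. First, (\ref{methodofcharacteristicsformula}) follows by differentiating $\tau\mapsto e^{-\lambda\tau}h_\lambda(s_\tau(x))$ along the flow:
\[
\frac{d}{d\tau}\Big(e^{-\lambda \tau}h_\lambda(s_\tau(x))\Big)=e^{-\lambda\tau}\Big(\frac{\partial h_\lambda}{\partial x}f-\lambda h_\lambda\Big)(s_\tau(x))=-e^{-\lambda\tau}w_\lambda^\top F_n(s_\tau(x)).
\]
This uses the PDE (\ref{linearPDE}). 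Integrating over $[0,t]$ gives the formula. Two facts then suffice:
\begin{itemize}
\item[(i)] $e^{-\lambda t}h_\lambda(s_t(x))\to 0$ as $t\to\infty$;
\item[(ii)] the integral $\int_0^\infty e^{-\lambda\tau}w_\lambda^\top F_n(s_\tau(x))\,d\tau$ converges absolutely.
\end{itemize}
Adding the linear part $w_\lambda^\top x$, with $w_\lambda^\top A=\lambda w_\lambda^\top$ as in (\ref{linearPDE}), then gives (\ref{formulaeigenfunction}).

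Both limits need a decay estimate for trajectories. Fix $x\in\cD$. By asymptotic stability, $s_t(x)$ enters any small ball $B_r$ around the origin after a finite time $t_0(x)$, and the integral over $[0,t_0]$ is finite by continuity. Inside $B_r$ I would use the linearization $A$: for any $\epsilon>0$ there are $r$ and $K$ such that
\[
\|s_t(x)\|\le K e^{({\rm Re}(\lambda_{max})+\epsilon)(t-t_0)}\|s_{t_0}(x)\|,\qquad t\ge t_0 .
\]
This comes from a Gronwall argument on $\dot x=Ax+F_n(x)$ with $\|F_n(x)\|\le c\|x\|^\ell\le c r^{\ell-1}\|x\|$ in $B_r$. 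The $\epsilon$ absorbs the nonlinear perturbation and any Jordan blocks.

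Combining this with (\ref{nonlineartermorder}) gives
\[
|e^{-\lambda\tau}w_\lambda^\top F_n(s_\tau(x))|\le C e^{(-{\rm Re}(\lambda)+\ell({\rm Re}(\lambda_{max})+\epsilon))\tau}.
\]
By the spectral-gap condition (\ref{eigenspread}), we can choose $\epsilon$ so that the exponent is negative, which gives (ii). For (i), the nonlinear part satisfies $h_\lambda(0)=0$ and $\frac{\partial h_\lambda}{\partial x}(0)=0$. If $h_\lambda$ is of order at least $\ell$ near the origin, the same estimate applies. In general we only know $h_\lambda=O(\|x\|^2)$, so this step needs care.

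I expect the main obstacle to be (i): pinning down the local order of $h_\lambda$ and showing that the boundary term vanishes. I would avoid it by reversing the logic. Define $\tilde h_\lambda(x):=\int_0^\infty e^{-\lambda\tau}w_\lambda^\top F_n(s_\tau(x))\,d\tau$, which is well defined by (ii). A semigroup shift gives
\[
\tilde h_\lambda(s_t(x))=e^{\lambda t}\Big(\tilde h_\lambda(x)-\int_0^t e^{-\lambda\tau}w_\lambda^\top F_n(s_\tau(x))\,d\tau\Big).
\]
Differentiating at $t=0$ shows that $\tilde h_\lambda$ satisfies (\ref{linearPDE}), so $\phi_\lambda=w_\lambda^\top x+\tilde h_\lambda$ is an eigenfunction. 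The decay estimate also shows $\tilde h_\lambda=O(\|x\|^\ell)$, so $\tilde h_\lambda$ has no linear part and $\phi_\lambda$ is principal.

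Uniqueness within the class of principal eigenfunctions follows from the same bound. The difference $g$ of two such nonlinear parts satisfies $g(x)=e^{-\lambda t}g(s_t(x))$, and this tends to $0$ provided $g=O(\|x\|^\ell)$, which is what the non-resonance condition (\ref{eigenspread}) guarantees. Finally, I would justify differentiation under the integral, needed for $\cC^1$ regularity, by uniform convergence of the differentiated integrand on compact subsets of $\cD$, using the variational equation along trajectories near the origin.
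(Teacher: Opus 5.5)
Your argument is correct, and its skeleton matches the paper's. Both verify directly that the integral in (\ref{formulaeigenfunction}) solves (\ref{linearPDE}), rather than passing to the limit in (\ref{methodofcharacteristicsformula}), and in both the condition (\ref{eigenspread}) supplies exponential decay of the integrand. The key lemmas differ.

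\textbf{Verifying the PDE.} The paper differentiates under the integral sign, integrates by parts, and removes the boundary term via (\ref{assumeconvergence}). You read off the Lie derivative from the semigroup-shift identity, which needs no interchange of derivative and integral. That interchange is only needed for $\cC^1$ regularity, which you treat separately.

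\textbf{The decay estimate.} The paper uses a near-identity linearizing conjugacy attributed to Hartman--Grobman, which as usually stated is only a homeomorphism. Its quadratic bounds on $d,\bar d$ therefore require a smooth-linearization theorem, and its rate $e^{{\rm Re}(\lambda_{max})t}$ ignores possible Jordan blocks. Your variation-of-constants/Gronwall estimate with an $\epsilon$-margin is self-contained and covers Jordan blocks. It costs nothing because (\ref{eigenspread}) is strict.

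\textbf{What you add.} You supply pieces the paper omits: absolute integrability of the integrand (the paper only shows it tends to zero), the bound $\tilde h_\lambda=O(\|x\|^\ell)$ confirming that the linear part is exactly $w_\lambda^\top x$, and $\cC^1$ regularity.

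\textbf{A correction to your uniqueness add-on.} The condition (\ref{eigenspread}) does not force the difference of two principal eigenfunctions to be $O(\|x\|^\ell)$. Take $\dot x_1=-x_1$, $\dot x_2=-2x_2+x_1^3$, so $\ell=3$, $\lambda=-2$, $w_\lambda=(0,1)^\top$, and $2+3\cdot(-1)<0$. The formula gives $\phi_\lambda=x_2+x_1^3$. Yet $x_2+x_1^3+c\,x_1^2$ is a smooth principal eigenfunction with eigenvalue $-2$ for every $c$, because the resonance $\lambda=2\lambda_{max}$ makes $x_1^2$ itself an eigenfunction. The difference $c\,x_1^2$ is only $O(\|x\|^2)$, and $e^{2t}c\,(x_1e^{-t})^2=c\,x_1^2$ does not decay.

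This is precisely the failure you anticipated in step (i), so reversing the logic was the right move. Uniqueness holds only within the class of nonlinear parts that are $O(\|x\|^\ell)$, or under a non-resonance hypothesis. The paper proves only existence, so your main argument is unaffected; just restrict the uniqueness claim accordingly.
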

\begin{proof}
We divide the proof in two parts. We start with the  following condition  
\begin{align}
\lim_{t\to \infty} e^{-\lambda t}F_n(s_t(x))=0\label{assumeconvergence}
\end{align}
for all $x\in \cD$.
Using this condition (\ref{assumeconvergence}), we first show that $h_\lambda(x)$ of the form 
\begin{align}h_\lambda(x)=\int_0^\infty e^{-\lambda_k t}w_\lambda^\top F_n(s_t(x))dt\label{assumesolution}
\end{align}
will satisfy the linear PDE (\ref{linearPDE}). In the second part of the proof, we show that the condition (\ref{assumeconvergence}) is satisfied under the assumption stated in the theorem statement i.e., (\ref{eigenspread}). We begin with the first part of the proof. Using the assumed form of the solution for $h_\lambda(x)$ from (\ref{assumesolution}) we obtain
\begin{align}
\frac{\partial h_\lambda}{\partial x}f(x)=\int_0^\infty e^{-\lambda t} \frac{\partial w_\lambda^\top F_n(s_t(x))}{\partial x}f(x)dt \nonumber\\
=\int_0^\infty e^{-\lambda t} \frac{d}{dt} (w_\lambda^\top F_n(s_t(x)))dt \nonumber\\
\left.e^{-\lambda t}w_\lambda^\top F_n(s_t(x))\right|_{t=0}^\infty+\lambda\int_0^\infty e^{-\lambda t}w^\top F_n(s_t(x))dt
\end{align}
Using the assumption (\ref{assumeconvergence}), we obtain
\[\frac{\partial h_\lambda}{\partial x}f(x)=-w_\lambda^\top F_n(x)+\lambda h_\lambda(x)\]
which is precisely the linear PDE (\ref{linearPDE}). 

For the second part of the proof, we will show that if the eigenvalues satisfy the spread condition (\ref{eigenspread}), then (\ref{assumeconvergence}) will hold true. Since the equilibrium point is assumed to be asymptotically stable we know that for given $\delta>0$ there exists a time $T(\delta)$ such that $\|s_t(x)\|\leq \delta$ for $t\geq T$ and $\lim_{t\to \infty}s_t(x)=0$. Following (\ref{nonlineartermorder}), we know that there exists a constant $K>0$ and $\delta>0$ such that
\[\|F_n(x)\|\leq K\|x\|^\ell,\;\;\;\|x\|\leq \delta\]
Hence,
\[\| F_n(s_t(x))\|\leq K \|s_t(x)\|^\ell\]
for $t\geq T$. 
Now for $\|x\|\leq \delta$, there exists, by Hartman Grobman theorem, a near identity change of coordinates with inverse in the small neighborhood around the origin, say of size $\|x\|\leq \delta$, of the form
\begin{equation}z=x+d(x)=D(x)\iff x=D^{-1}(z)=z+\bar d(z),\label{HG}
\end{equation}
with $d(x)$ and $\bar d(z)$ purely nonlinear
such that the nonlinear system is transformed \textcolor{black}{into} linear system i.e.,
$\dot x=Ax+F_n(x)\implies \dot z=Az$ 
and hence 
\[s_t(x)=D^{-1}(e^{A t}D(x))\implies
s_t(x)=D^{-1}(e^{A t}(x+d(x)))\]
% \[=e^{\bA t}(\bx+\bd(\bx))+\bar \bd(e^{\bA t}(\bx+\bd(\bx)))\]
% \[=e^{\bA t}\bx+e^{\bA t}\bd(\bx)+\bar \bd(e^{\bA t}(\bx+\bd(\bx)))\]
\[=e^{A t}x+e^{A t}d(x)+\bar d(e^{A t}x+e^{A t}d(x)).\]
In the above, we have used (\ref{HG}) for $D^{-1}$. Since $\bar d(z)$ is purely nonlinear, for $\|x\|\leq \delta$, we can get using mean value theorem 
\[\|\bar d(z)\|\leq c_{\bar d} \|z\|^2,\;\;\;\;\|d(x)\|\leq c_d \|x\|^2.\]
for some constants $c_d$ and $c_{\bar d}$.  Using the above inequality, Cauchy Schwartz inequality,  and the fact that $\|x\|\leq \delta$, we obtain 
% \[\|\bs_t(\bx)\|\leq \|e^{\bA t}\bx\|+c_{\bd}\|e^{\bA t}\|\|\bx\|^2+c_{\bar \bd}\|e^{2\bA t}\|\|(\bx+\bd(\bx))\|^2\]
% \[\leq \|e^{\bA t}\|\|\bx\|+c_\bd \|e^{\bA t}\|\|\bx\|^2+c_{\bar \bd}\|e^{2\bA t}\|\|\bz\|^2\]
\[\|s_t(x)\|\leq c_1 e^{{\rm Re}(\lambda_{max} t)}\implies \|s_t(x)\|^\ell\leq c_1^\ell e^{{\rm Re}(\ell\lambda_{max} t)} \]
for some constant $c_1$ that depends on $\epsilon, c_d$, and $\bar c_d$.

\end{proof}

\begin{remark}Note that by time-reversing the vector field, the above theorem can also be applied for the case when the equilibrium point is anti-stable i.e., matrix $-A$ (Eq. \ref{system}) is Hurwitz. 
\end{remark}

For system with saddle-type equilibrium point, the path-integral formula can be used for the computation of the eigenfunction under the following assumption.

\begin{assumption}\label{assumption_saddle} We assume that the nonlinear parts of the eigenfunctions are bounded i.e.,
\begin{align}
|h_\lambda(x)|\leq M\label{eigenfunction_bounded}
\end{align}
for some constant $M$ and for all $x\in \mR^n$. 
\end{assumption}
\begin{theorem} Under the Assumption \ref{assumption_saddle}, the Koopman principal eigenfunction corresponding
to eigenvalue $\lambda$
\begin{itemize}
\item with $ {\rm Re}(\lambda)>0$ is given by
\begin{align}
\phi_\lambda(x)=w_\lambda^\top x+\int_0^\infty e^{-\lambda t} w_\lambda^\top F_n(s_t(x))dt \label{saddle_eigenfunctionformula}
\end{align}
\item with $ {\rm Re}(\lambda)<0$ is given by
\begin{align}
\phi_\lambda(x)=w_\lambda^\top x-\int_0^\infty e^{\lambda \theta} w_\lambda^\top F_n(s_{-\theta}(x))d\theta \label{saddle_eigenfunctionformula2}
\end{align}
\end{itemize}
\end{theorem}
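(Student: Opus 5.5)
The plan is to start from the method-of-characteristics identity (\ref{methodofcharacteristicsformula}), which holds for every finite $t$ and every $x$ for the nonlinear part $h_\lambda$ of the principal eigenfunction (assumed to exist as a solution of the linear PDE (\ref{linearPDE}) by Assumption \ref{assume_peig}). The two cases are then handled by sending the horizon to infinity in the forward or backward direction, whichever makes the boundary term vanish. Assumption \ref{assumption_saddle} is exactly what kills that boundary term, without any stability or eigenvalue-spread hypothesis of the type used in Theorem \ref{theorem_pi}.

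\textbf{Case ${\rm Re}(\lambda)>0$.} Write (\ref{methodofcharacteristicsformula}) at time $t$:
\[
h_\lambda(x)=e^{-\lambda t}h_\lambda(s_t(x))+\int_0^t e^{-\lambda \tau}w_\lambda^\top F_n(s_\tau(x))\,d\tau .
\]
By (\ref{eigenfunction_bounded}) we have $|e^{-\lambda t}h_\lambda(s_t(x))|\le M e^{-{\rm Re}(\lambda)t}\to 0$. Hence the limit of the integral as $t\to\infty$ exists and equals $h_\lambda(x)$. This gives (\ref{saddle_eigenfunctionformula}), with the integral understood as an improper (conditionally convergent) integral. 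Adding the linear part $w_\lambda^\top x$ yields $\phi_\lambda$.

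\textbf{Case ${\rm Re}(\lambda)<0$.} First rederive the characteristic identity along backward trajectories. Along $\theta\mapsto s_{-\theta}(x)$, (\ref{linearPDE}) gives
\[
\frac{d}{d\theta}\big(e^{\lambda\theta}h_\lambda(s_{-\theta}(x))\big)=e^{\lambda\theta}\Big(\lambda h_\lambda-\frac{\partial h_\lambda}{\partial x}f\Big)(s_{-\theta}(x))=e^{\lambda\theta}w_\lambda^\top F_n(s_{-\theta}(x)).
\]
Integrating over $[0,\theta]$ gives
\[
h_\lambda(x)=e^{\lambda\theta}h_\lambda(s_{-\theta}(x))-\int_0^\theta e^{\lambda\sigma}w_\lambda^\top F_n(s_{-\sigma}(x))\,d\sigma .
\]
Equivalently, this is (\ref{methodofcharacteristicsformula}) with $t=-\theta$. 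Boundedness of $h_\lambda$ gives $|e^{\lambda\theta}h_\lambda(s_{-\theta}(x))|\le Me^{{\rm Re}(\lambda)\theta}\to 0$. Letting $\theta\to\infty$ yields (\ref{saddle_eigenfunctionformula2}).

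It remains to check that the flow is defined for all forward and backward times, so that $s_t(x)$ and $s_{-\theta}(x)$ make sense on $[0,\infty)$. I would impose this as completeness of $f$, implicit in Assumption \ref{assume_peig} where the eigenfunctions are taken to be defined on all of $\mR^n$. For complex $\lambda$ the same computations hold componentwise on the real Jordan block, using $|e^{-\lambda t}|=e^{-{\rm Re}(\lambda)t}$.

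The main obstacle is the logical status of the assumption. The argument shows that if a bounded $h_\lambda$ solving (\ref{linearPDE}) exists, it must be given by these formulas; this also yields uniqueness within the bounded class. It does not by itself show that the integrals converge absolutely or that they define a $\cC^1$ function. If the converse direction were required, I would verify directly, as in the first part of the proof of Theorem \ref{theorem_pi}, that the limit satisfies (\ref{linearPDE}): differentiate along the flow and use that $e^{-\lambda t}w_\lambda^\top F_n(s_t(x))$ integrates to a bounded limit. For the statement as posed, however, the one-sided derivation above suffices.
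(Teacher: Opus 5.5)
Your proposal is correct and follows the paper's proof. You start from the characteristic identity (\ref{methodofcharacteristicsformula}) and use $|h_\lambda|\le M$ to kill the boundary term, sending $t\to\infty$ when ${\rm Re}(\lambda)>0$ and $t\to-\infty$ when ${\rm Re}(\lambda)<0$; your direct backward-flow differentiation is just the paper's substitution $t=-\theta$ written out, and your extra care (bounding by $e^{-{\rm Re}(\lambda)t}$ rather than $e^{-\lambda t}$, reading the integral as an improper limit, and requiring completeness of the flow) only tightens details the paper leaves implicit.
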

\begin{proof} The method of characteristics based solution to the nonlinear part of the principal eigenfunction satisfying thelinear PDE is given by (\ref{methodofcharacteristicsformula}). Now consider any eigenvalue with ${\rm Re}(\lambda>0$. We have using (\ref{eigenfunction_bounded})
\[\lim_{t\to \infty}|e^{-\lambda t}h_\lambda(s_t(x))|\leq e^{-\lambda t}M=0\]
Hence, using (\ref{methodofcharacteristicsformula}) in the limit as $t\to \infty$, we obtain the desired expression (\ref{saddle_eigenfunctionformula}) for the eigenfunction. 
For ${\rm Re}(\lambda)<0$, we can reverse the time. In particular, we have  

\[\lim_{t\to -\infty}|e^{-\lambda t}h_\lambda(s_t(x))|\leq e^{-\lambda t}M=0\]
and hence (\ref{methodofcharacteristicsformula}) in the limit as $t\to -\infty$ reduces to 
\begin{align}
h_\lambda(x)=\int_0^{-\infty} e^{-\lambda \tau}w_\lambda^\top F_n(s_\tau(x))d\tau
\end{align}
We obtain the desired formula in (\ref{saddle_eigenfunctionformula2}) after performing the change of variable from $t=-\theta$. 
% now $\tau=-\theta, d\tau=-d\theta$ and $\tau=-\infty, \theta=\infty$
% \begin{align}
% h_\lambda(x)=-\int_0^\infty e^{\lambda \theta}w_\lambda^\top F_n(s_{-\theta}(x))d\theta
% \end{align}

\end{proof}

\begin{remark} The formulas (\ref{formulaeigenfunction}) and (\ref{saddle_eigenfunctionformula}) generalize to the case of complex eigenvalues where one can treat the real and imaginary parts of the complex eigenfunctions separately. 
\end{remark}
The infinite horizon path-integral formula discussed in the previous section could be restrictive. These restriction arise in the form of conditions (\ref{eigenspread}) and (\ref{eigenfunction_bounded}). In \cite{deka2024pathcdc}, the finite-time extension of the path integral formula is discussed. However, in the following section, we use the finite-horizon path-integral formula and use it to construct a dynamically informed basis functions which are used towards the approximation of the principal eigenfunctions. 

\subsection{Characteristics-Inspired Basis Functions for Koopman Eigenfunction Approximation}\label{section_basisPI}

A central challenge in approximating Koopman eigenfunctions for nonlinear systems is the construction of basis functions that faithfully capture the geometry of the underlying dynamics. Classical choices such as polynomial, trigonometric, or radial basis functions are agnostic to the system vector field $f(x)$, and consequently require a large number of elements to resolve even moderately nonlinear behavior. Approximation quality often degrades rapidly away from the origin or outside the training region.
To overcome these limitations, we introduce a \emph{dynamics-informed} basis construction inspired by the \emph{method of characteristics} for the Koopman eigenfunction PDE
\begin{equation}
\frac {\partial \phi_\lambda(x)}{\partial x} f(x) = \lambda \phi_\lambda(x),
\label{eq:koopman-eig-pde}
\end{equation}
whose characteristic curves coincide with trajectories of the nonlinear flow $s_t(x)$. This relation motivates the design of \emph{characteristics-based basis functions} that incorporate the local behavior of the nonlinear flow directly into the eigenfunction approximation.

\subsubsection*{Path-Integral Characteristics Basis}

Given a candidate Koopman eigenvalue $\lambda$ and a left eigenvector $w$ of the linearization matrix $A$, we define a family of nonlinear basis functions
\begin{equation}
\psi_k^\lambda(x) 
= \int_0^{\Delta_k} e^{-\lambda t} \, w^\top F_n(s_t(x)) \, dt,
\qquad k = 1,\dots,K,\label{stepsintegration}
\end{equation}
where $\Delta_k$ are short, increasing integration horizons that encode multi-scale nonlinear effects.
Each function $\psi_k^\lambda(x)$ is therefore a localized probe of the nonlinear dynamics, capturing how the nonlinear component of the vector field distorts trajectories over different time scales. Unlike classical static dictionaries, these features explicitly incorporate the system’s trajectory geometry, yielding a basis that is:

\begin{itemize}
    \item \textit{Dynamics-adaptive:} the basis aligns with the characteristic curves of the eigenfunction PDE \eqref{eq:koopman-eig-pde};
    \item \textit{State-localized:} since $\psi_k^\lambda(x)$ is computed from short trajectories initialized at $x$, no global trajectory integration is required;
    \item \textit{Multi-scale:} the distinct horizons $\Delta_k$ allow the basis to capture both local and moderately global nonlinear behavior;
    \item \textit{Linear-term free:} because $F_n$ excludes the linear component, the nonlinear basis functions satisfy $\psi_k^\lambda(x) = \mathcal{O}(\|x\|^2)$, guaranteeing compatibility with the decomposition $\phi_\lambda(x) = w^\top x + h_\lambda(x)$.
\end{itemize}

\subsubsection*{Galerkin Approximation Using Characteristics Basis}

Using the characteristics-based dictionary $\{\psi_k^\lambda\}$, the nonlinear part of the Koopman eigenfunction is approximated by
\begin{equation}
h_\lambda(x) \approx \sum_{k=1}^K a_k \psi_k^\lambda(x),
\end{equation}
and substituting $h_\lambda(x)$ into \eqref{linearPDE} yields a finite-dimensional least-squares problem for the coefficients $a_k$. We omit the details on the finite-dimensional Galerkin based approximation of eigenfunctions leading to the least-squares problem as it is a standard approach discussed in various literature including \cite{vaidya2025koopman} in the specific context of the Koopman principal eigenfunctions. This produces a Galerkin-type approximation scheme in which both test and trial functions are derived from the flow geometry of the system.
The proposed construction offers several key benefits:
\begin{itemize}
    \item \textit{Flow-aligned representation:} the basis adapts naturally to the nonlinear trajectory structure of the system.
    \item \textit{Compactness:} because each $\psi_k(x)$ carries dynamical information, only a small number of basis functions are required for accurate approximation.
    \item \textit{Scalability:} no combinatorial blow-up, unlike polynomial lifting.
    \item \textit{Online compatibility:} the local nature of the basis allows computation along an estimated trajectory, enabling real-time implementation. 
    \item \textit{Theoretical grounding:} the construction follows directly from the characteristic representation of eigenfunction solutions, providing a principled alternative to heuristic dictionary learning.
\end{itemize}

\section{Simulation Results}

In this section, we present simulation results for the spectral Koopman-based design of nonlinear estimator. All the simulation codes are developed in  MATLAB  and run on a computer with 16 GB of RAM and a 3.8 GHz  Intel Core i7  processor. The total simulation time for each example was in the order of 1-5 minutes.

% \subsection{Example 1: Analytical}

% \paragraph{Analytically tractable 4D example.}
% To illustrate the proposed KBK filter with closed-form principal Koopman eigenfunctions, we consider the nonlinear system
% \begin{align}\label{eq:ex4d}
% \dot x_1=\lambda_1 x_1,\qquad
% \dot x_2=\lambda_2 x_2,\nonumber\\
% \dot x_3=-x_3+x_1^2,\qquad
% \dot x_4=-x_4+x_2^2,
% \end{align}
% where $\lambda_1>0$ and $\lambda_2<0$ so that the equilibrium at the origin is hyperbolic.
% For this system, a set of principal Koopman eigenfunctions can be constructed analytically:
% \begin{align}
% \phi_1(x)=x_1,\quad \phi_2(x)=x_2,\nonumber\\
% \phi_3(x)=x_3-a x_1^2,\quad
% \phi_4(x)=x_4-b x_2^2,
% \end{align}
% with $a=(1+2\lambda_1)^{-1}$ and $b=(1+2\lambda_2)^{-1}$, and corresponding eigenvalues
% $\lambda(\phi_1)=\lambda_1$, $\lambda(\phi_2)=\lambda_2$, and $\lambda(\phi_3)=\lambda(\phi_4)=-1$.
% Moreover, the product observables $\psi_5(x)=x_1^2$ and $\psi_6(x)=x_2^2$ are Koopman eigenfunctions with eigenvalues $2\lambda_1$ and $2\lambda_2$, respectively.
% These eigenfunctions yield an explicit lifted coordinate map
% $\Phi(x)=[\phi_1,\phi_2,\phi_3,\phi_4,\psi_5,\psi_6]^\top$ in which the lifted dynamics are linear:
% \[
% \dot \Phi = \Lambda \Phi,
% \]
% for a diagonal $\Lambda$ determined by the above eigenvalues. Finally, with the measurement
% $y=[x_3\;\;x_4]^\top+v$, the output admits a linear representation in Koopman coordinates,
% $y=C_\Phi \Phi(x)+v$, where $C_\Phi$ is known explicitly from $x_3=\phi_3+a\psi_5$ and
% $x_4=\phi_4+b\psi_6$. 

\subsection{Example 1: Analytical} Our first example is one for which the eigenfunctions can be computed analytically. Furthermore, the spanning Assumption \ref{assume_span} is also satisfied and hence the filter equation can be solved analytically. 
\begin{align}
\dot x_1&=\rho x_1+w_1(t)\nonumber\\
\dot x_2&=\mu x_2-(\rho^2-\mu)c x_1^2+w_2(t)\label{example1}\\
y&=x_2-dx_1^2+x_1^2+v(t)\label{example1output}
\end{align}
where $d=\frac{(\mu-\rho^2)c}{\mu-2\rho}$. The Koopman principal eigenfunctions and eigenfunction of this system can be computed analytically and of the form 
\[
\Phi(x)
=
\begin{bmatrix}
\phi_1 \\[4pt] \phi_2
\end{bmatrix}
=
\begin{bmatrix}
x_1 \\[4pt]
x_2-d x_1^2
\end{bmatrix},
\qquad
\Lambda
=
\operatorname{diag}(\rho,\mu).
\]
For this output, the Assumption \ref{assume_span} is not satisfied and hence we have to lift the eigenfunctions to expressed the output in terms of lifted eigenfunctions. In particular, the output can be expressed as 
\[y=\phi_2(x)+\phi_1^2(x)+v(t)\]
The lifted eigenfunctions for this case will be of the form
\[\bar \Phi(x)=\begin{pmatrix}\phi_1(x)\\\phi_2(x)\\\phi_1^2(x)\end{pmatrix}\]

The various parameter values are
\[\rho = -0.1,\;
\mu  = -0.3,\;
c   = 1.0,\; R   = 1e^{-4} I,\;
Q   = 1e^{-1}\]

% \[\rho = -0.05,\;
% \mu  = -0.08,\;
% c   = 1.0,\; R   = 1e^{-4} I,\;
% Q   = 1e^{-1}\]
The system matrices for this case will be 
\[\bar \Lambda={\rm diag}(\rho,\mu,2\rho),\;\;\bar C=\begin{pmatrix}0&1&1\end{pmatrix}\]

We use the dynamics-inspired basis function as discussed in Section \ref{section_basisPI} for the computation of the principal eigenfunctions. For the construction of dynamics-inspired basis functions, we use $441$ data point uniformly sampled in the domain $[-1,1]^2$ with $\Delta=\{0.1,0.2,0.4\}$ the time step of integration (refer to Eq. \ref{stepsintegration}) and $\varepsilon=10^{-5}$ used for approximation of the gradient of eigenfunction.  In Fig. \ref{fig1:example1}, we show the comparison of the error plots between the $\|x_{true}-x_{KBK}\|$ and $\|x_{true}-x_{EKF}\|$. The performance of the proposed KBK filter is comparable to the EKF filter. However, the main advantage of the KBK filter is that it gives complete information of the state density function as shown in Fig. \ref{fig2:example1}.   
 
 \begin{figure}
    \centering
    \includegraphics[width=.8\linewidth]{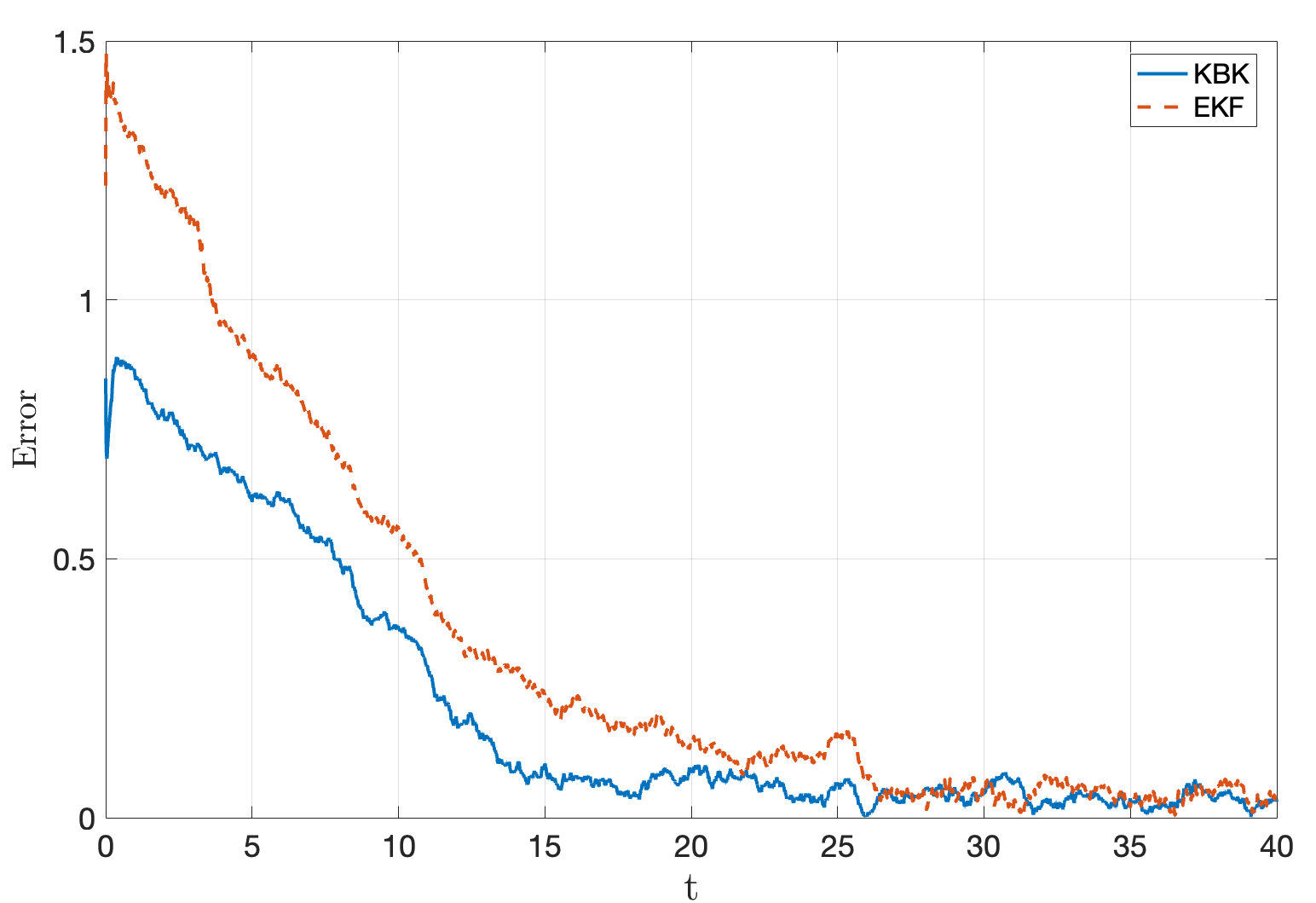}
    \caption{Comparison of the error estimates between the KBK, (i.e., $\|x_{true}-x_{KBK}\|$ ) and EKF filter (i.e., $\|x_{true}-x_{EKF}\|$ ).}
    \label{fig1:example1}
\end{figure}\begin{figure}
    \centering
    \includegraphics[width=1.0\linewidth]{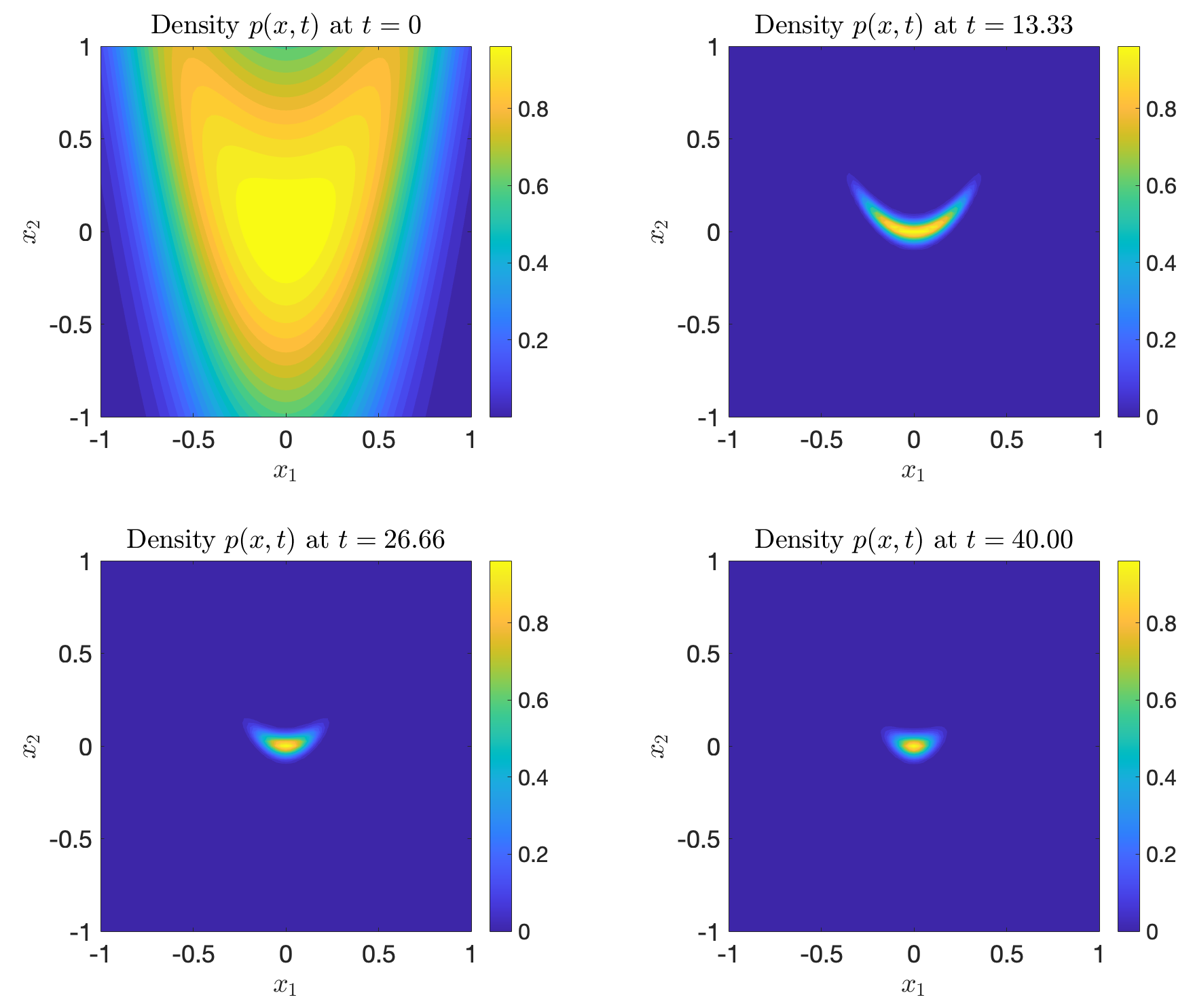}
    \caption{Time evolution of the state density function, $p(x,t)$.}
    \label{fig2:example1}
\end{figure}

% \begin{figure}
%     \centering
%     \includegraphics[width=0.8\linewidth]{fig21.png}
%     \caption{Comparison of the error estimates between the KBK, (i.e., $\|x_{true}-x_{KBK}\|$ ) and EKF filter (i.e., $\|x_{true}-x_{EKF}\|$ ).}
%     \label{fig1:example2}
% \end{figure}\begin{figure}
%     \centering
%     \includegraphics[width=1.0\linewidth]{fig22.png}
%     \caption{Time evolution of the state density function, $p(x,t)$}
%     \label{fig2:example2}
% \end{figure}

% For this example, the eigenfunctions are computed from data using the path integral formula. The equilibrium point for this example is asymptotically stable where the two eigenvalues, $(\rho=-0.05, \mu=-0.08)$ satisfy the spread condition (\ref{eigenspread}) condition and hence the results of Theorem \ref{theorem_pi} apply to compute the eigenfunction using path integral formula. 

% In Fig. \ref{fig2:example1}, we compare the state-estimation errors obtained using the proposed KBK filter and the Extended Kalman Filter (EKF). Specifically, we plot the error norms $\|x_{true}(t)-x_{KBK}(t)\|$ and 
% $\|x_{true}(t)-x_{EKF}(t)\|$ over time. The results show that the KBK filter exhibits superior transient performance relative to the EKF.

The critical difference and the main advantage of the the KBK filter over EKF filter is the ability to compute the state density function. In Fig. \ref{fig2:example1}, we illustrate the evolution of the state density function at several time instants. The density $p(x,t)$ is computed using the relation $p(x,t)=\exp(-V(x,t))$, where 
$V(x,t)$ admits a quadratic parameterization in terms of the principal eigenfunctions of the Koopman operator. As time progresses, the reduction in uncertainty is clearly reflected by the decreasing spread of the density, indicating improved concentration of the posterior estimate. 

\subsection{Example 2: Robust State Estimation for Quadrotor Vertical Dynamics with Parameter Mismatch}
\label{sec:quad_example}
% =========================================================

The primary objective of this example is to demonstrate the inherent robustness of the proposed KBK filter to modeling uncertainty and to contrast this robustness with the sensitivity of the Extended Kalman Filter (EKF) under parameter mismatch. 
In practical quadrotor systems, parameters such as damping coefficients, actuator gains, and sensor nonlinearities are rarely known with high precision. 
Unmodeled aerodynamic effects, actuator saturation, and environmental disturbances further introduce discrepancies between the true system dynamics and the nominal model used for estimation. 

Since the EKF is explicitly model-based, its gain computation relies on the assumed system parameters; in our implementation, the EKF is constructed using mismatched parameters to reflect realistic modeling errors. 
In contrast, the KBK filter computes the spectral representation and associated Riccati evolution using data generated from the true system, thereby avoiding direct reliance on the potentially inaccurate parametric model. 
This setup enables a direct comparison of robustness properties under structured modeling uncertainty.

Let
$x = \begin{bmatrix} z & v & \xi \end{bmatrix}^\top$
denote altitude $z$, vertical velocity $v=\dot z$, and an internal aerodynamic/actuator state $\xi$ that captures unmodeled nonlinear effects such as propeller wake interactions or actuator saturation.
\begin{align}
\dot z &= v+w_1(t), \nonumber\\
\dot v &= -k z - \theta_d c_{\text{base}} v + g(\xi) + b u+w_2(t), \nonumber\\
\dot \xi &= -\alpha \xi+w_3(t).
\label{eq:quad_model}
\end{align}
where,  $k>0$ represents local linearization of gravity about hover, $\theta_d c_{\text{base}}$ is an uncertain aerodynamic damping coefficient,  $g(\xi)=1.2 \tanh(\xi), 
$ models nonlinear internal dynamics,
$u$ is the thrust control input. Two key parameters are assumed unknown: $\theta_d$ (damping scaling),  $\theta_m$ in the nonlinear measurement model
\[
y = \tanh(\theta_m z) + v(t),
\]
representing sensor saturation or nonlinear altitude sensing.
This setting captures realistic mismatch in both dynamics and sensing.
The true parameters are:
\[
k=2.0, \quad c_{\text{base}}=2.0, \quad \alpha=3.0, \quad b=1.0,
\]
\[
\theta_d^{\text{true}}=1.6, \qquad \theta_m^{\text{true}}=1.0.
\]
 The process noise intensity:
$
R=\operatorname{diag}(0.02^2,\,0.06^2,\,0.05^2)
$ and the 
measurement noise variance:
$
Q= (0.03)^2.
$
The input excitation is
\[
u(t)=2\sin(0.5 t)+1.4\sin(1.6 t+0.7),
\]
which drives the system through nonlinear operating regimes. For the KBK filter design all the quantities were computed from the data. In particular, the eigenfunctions $\Phi(x)$ and the eigenmatrix $\Lambda$ are computed from data. 
% (i) identifying the dominant linear dynamics of the $(z,v)$ subsystem,
% (ii) approximating the nonlinear coupling term $g(\xi)$ from data, and
% (iii) computing the nonlinear correction to the eigenfunctions using a
% finite-horizon path-integral formula.

% \paragraph{Simulation Data.}
The true stochastic system is simulated over a horizon
$T = 60~\text{s}$,
with time step $
dt = 2\times 10^{-3}~\text{s}$. 
The total number of samples is
$N = \frac{T}{dt} = 30000.
$. 
Time derivatives required for regression are computed using central
finite differences, $
\dot{x}_k \approx \frac{x_{k+1}-x_{k-1}}{2dt},$
with forward/backward differences at the boundaries.
The velocity dynamics are modeled as
\[
\dot v = -k z - c v + b u + g(\xi),
\]
where $g(\xi)$ is unknown. The regression model uses the basis
$
\{-z,\,-v,\,u,\,\xi,\,\tanh(\xi)\}.
$ Thus we fit
\[\dot v \approx
k_{\mathrm{est}}(-z)
+
c_{\mathrm{est}}(-v)
+
b_{\mathrm{est}} u
+
a_\xi \xi
+
a_{\tanh}\tanh(\xi).
\]
This yields $
g_{\mathrm{hat}}(\xi)
=
a_\xi \xi
+
a_{\tanh}\tanh(\xi),
$.
% The identified linear subsystem matrix is
% \[
% A_2^{\mathrm{data}}
% =
% \begin{bmatrix}
% 0 & 1\\
% -k_{\mathrm{est}} & -c_{\mathrm{est}}
% \end{bmatrix}.
% \]
% \paragraph{Data-Driven Spectrum.}
The Koopman principal eigenvalues estimated from data equals
$
\lambda_1 = -0.8845,\;\;
\lambda_2 = -2.2435,\;\;\lambda_3=-3.013
$
which matches closely with the eigenvalues of the linearization of the system dynamics at the origin $\lambda_1=-0.8517,\lambda_2=   -2.3483,\;\;\lambda_3=-3$. 
The Koopman eigenfunctions are constructed as
\begin{equation}
\phi_i(x)
=
w_i^\top
\begin{bmatrix}
z \\
v
\end{bmatrix}
+
h_{\lambda_i}(\xi),
\qquad i=1,2, \;\;\phi_3(x)=\xi,
\end{equation}
The nonlinear correction term is computed using a finite-horizon
approximation of the path integral
\begin{equation}
h_\lambda(\xi)
=
\int_0^\infty
e^{-\lambda t}
g(\xi e^{-\alpha t})
\,dt.
\end{equation}
Numerically, we approximate this integral over a finite horizon
of 
$T_h = 4~\text{s}.$ The numerical quadrature parameters are:
$ dt_{\mathrm{int}} = 10^{-3}~\text{s},
n_{\mathrm{int}} = 4000.
$. 
Thus the approximation is
\begin{equation*}
h_\lambda(\xi)
\approx
\sum_{k=0}^{n_{\mathrm{int}}-1}
e^{-\lambda t_k}
g_{\mathrm{hat}}(\xi e^{-\alpha t_k})
\,dt_{\mathrm{int}},
\qquad
t_k = k\,dt_{\mathrm{int}}.
\end{equation*}

% The derivative used in Jacobian computations is similarly approximated as
% \begin{equation}
% \frac{d}{d\xi}h_\lambda(\xi)
% \approx
% \sum_{k=0}^{n_{\mathrm{int}}-1}
% e^{-\lambda t_k}
% g'_{\mathrm{hat}}(\xi e^{-\alpha t_k})
% e^{-\alpha t_k}
% \,dt_{\mathrm{int}}.
% \end{equation}

% \paragraph{Summary of Eigenfunction Approximation Parameters.}

% \begin{itemize}
% \item Training data length: $N_{\mathrm{tr}} = 30000$ samples.
% \item Sampling time: $dt = 0.002$ s.
% \item Regression basis for $g(\xi)$: $\{\xi,\tanh(\xi)\}$.
% \item Identified spectrum from $A_2^{\mathrm{data}}$.
% \item Path-integral horizon: $T_h = 4$ s.
% \item Quadrature step: $dt_{\mathrm{int}} = 10^{-3}$ s.
% \item Number of quadrature points: $n_{\mathrm{int}} = 4000$.
% \end{itemize}
% For simulation, we use 
The eigenfunction residual RMS errors are as follows.
\begin{align*}
\mathrm{RMS}_{\lambda_1} &= 1.742\times10^{-2},\;\;
\mathrm{RMS}_{\lambda_2} = 3.774\times10^{-2},\nonumber\\
\mathrm{RMS}_{\xi} &= 7.946\times10^{-14}.
\end{align*}

We project the nonlinear measurement into Koopman coordinates, $ h(x)\approx C\Phi(x)$.  Least-squares yields
$
C=
\begin{bmatrix}
1.4919 & -0.8103 & 0.4857
\end{bmatrix},
$
with projection RMS error $4.447\times10^{-2}$.

The EKF is designed with incorrect parameters, $
\theta_d^{0}=1.2, \;\; \theta_m^{0}=0.6.
$ as the EKF is designed based on the analytical system model and its linearization. 
Thus, both the dynamics and measurement models are inaccurate. In Figs. \ref{fig1:example2} and \ref{fig2:example2}, we compare the performance of the EKF and the proposed KBK filter against parameter mismatch. In Fig. \ref{fig1:example2}, we notice that the KBK error is close to zero whereas the state error for the EKF is nonzero. In Fig. \ref{fig2:example2}, we vary the EKF assumed $\theta_m^0$ and plot the aggregate root mean square error vs $\theta_m^0$. 
The EKF error increases rapidly with mismatch, while KBK remains essentially invariant. Note that the true value of the $\theta^{true}_m=1$, where the EKF RMSE is the smallest. This example and the simulation results highlights the
sensitivity of EKF to parametric model errors and the robustness of KBK under parametric uncertainty.

% \begin{figure}
%     \centering
%     \includegraphics[width=1.0\linewidth]{fig1quad.png}
%     \caption{Comparison of the KBK  with the EKF filter error plots. }
%     \label{fig2:example2}
% \end{figure}

\begin{figure}
    \centering
    \includegraphics[width=1.0\linewidth]{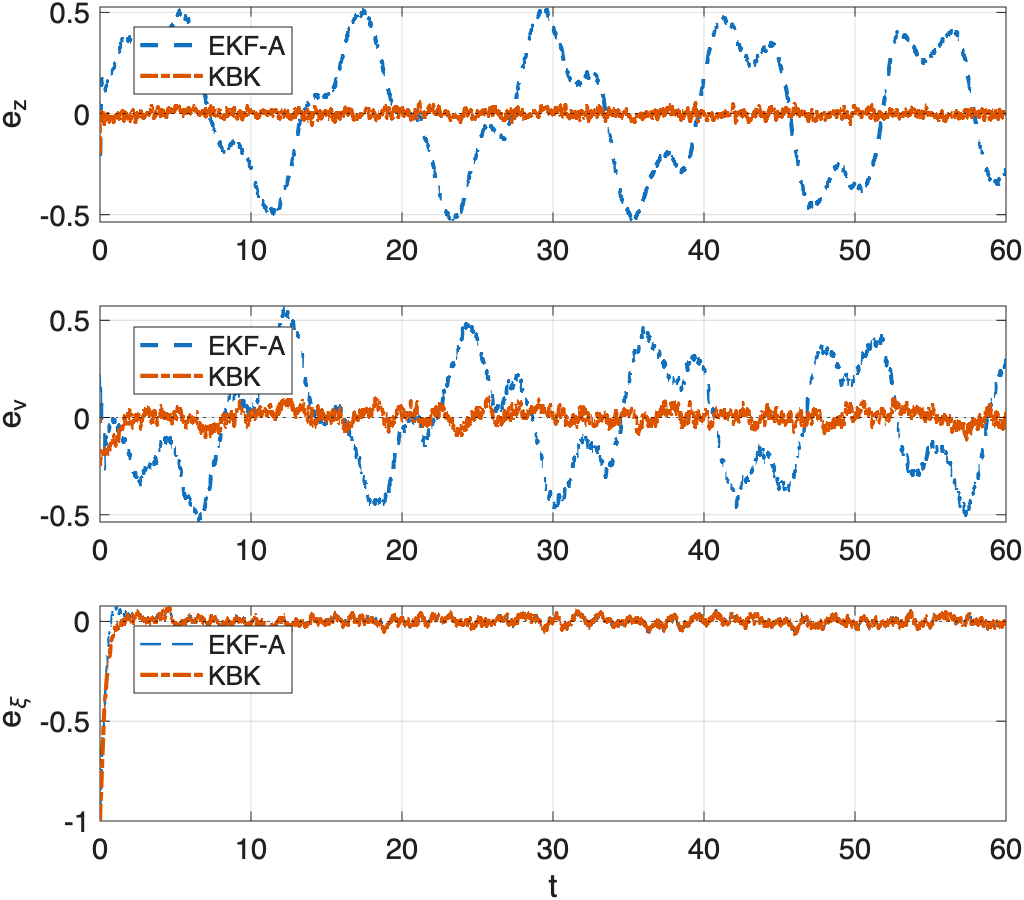}
    \caption{Comparison of the KBK  with the EKF filter: State error plots. }
    \label{fig1:example2}
\end{figure}

\begin{figure}
    \centering
    \includegraphics[width=0.8\linewidth]{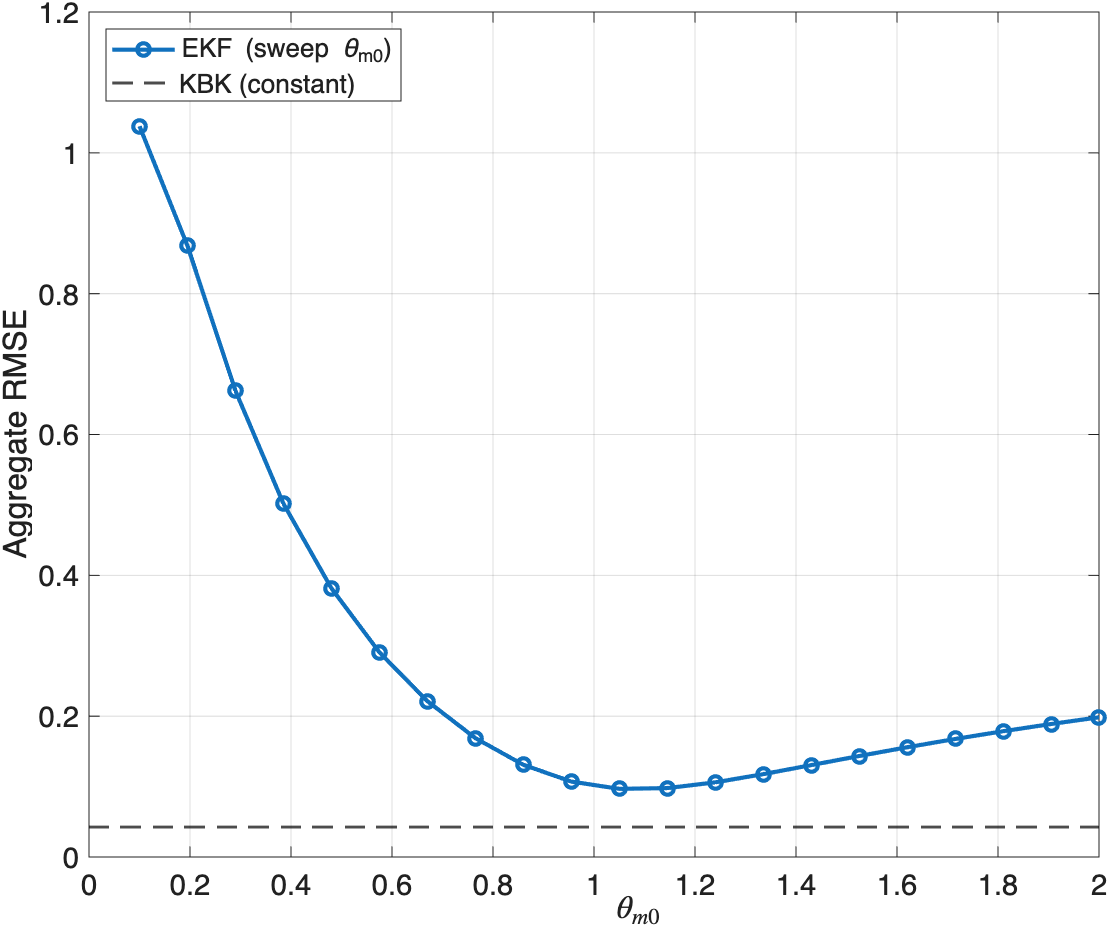}
    \caption{Comparison of the KBK  with the EKF filter}
    \label{fig2:example2}
\end{figure}

\subsection{Example 3: Data-driven estimator for vehicle dynamics}  

Our third example considers a controlled dynamical system motivated by off-road vehicle applications and is designed to illustrate the data-driven capability of the proposed KBK filter. Although an analytical model is used to generate simulation data, the KBK filter itself does not require explicit knowledge of the system vector field for implementation. In particular, the principal Koopman eigenfunctions and the associated filter gain are computed directly from trajectory data using the proposed spectral and path-integral procedures, without linearization or access to the system Jacobian. This contrasts with classical approaches such as the extended Kalman filter, which rely explicitly on model-based linearization. The example therefore demonstrates that the KBK framework can operate as a purely data-driven estimator while retaining the structural advantages of Riccati-type filtering.

Accurate estimation of the tire road friction coefficient $\mu$ is essential for stability control, autonomous driving, and safety-critical decision making. The friction coefficient $\mu$ cannot be measured directly and enters the vehicle dynamics only through nonlinear tire forces, it must be inferred from measurable motion variables such as lateral acceleration and yaw rate. The estimation problem is challenging due to the nonlinear dependence of $F_{y,f}$ and $F_{y,r}$ on slip angles, the weak observability of $\mu$ under mild steering excitation, and the possibility of abrupt changes in road conditions that require fast yet stable adaptation.

The dynamic model used for estimation is a reduced lateral--yaw bicycle model with constant forward speed $u_0$, where the state vector is $x = [\,v,\ r,\ \mu\,]^\top$. The evolution equations are
\begin{align}
    \dot{v} &= \frac{1}{m}\left(F_{y,f} + F_{y,r}\right) - u_0 r,\;\;\;
    \dot{r} &= \frac{1}{I_z}\left(l_f F_{y,f} - l_r F_{y,r}\right),\nonumber \\
    \dot{\mu} &= w(t).\label{vehicle_dynamics}
\end{align}
where $w\in \mR^3$ is the white Gaussian noise. To define the tire forces, we first define the slip angles as
\begin{align}
    \alpha_f &= \delta - \frac{v + l_f r}{u_0}, \qquad
    \alpha_r = -\frac{v - l_r r}{u_0},
\end{align}
and the nonlinear lateral forces follow the Pacejka form
\begin{align}
    F_{y,f} &= \mu F_{z,f}\,
        \sin\!\left(C\arctan\!\big(B\alpha_f - E (B\alpha_f - \arctan(B\alpha_f))\big)\right)\nonumber \\
    F_{y,r} &= \mu F_{z,r}\,
        \sin\!\left(C\arctan\!\big(B\alpha_r - E (B\alpha_r - \arctan(B\alpha_r))\big)\right)
\end{align}
with static normal loads
$
F_{z,f} = \frac{m g l_r}{l_f + l_r},\;\;\;
F_{z,r} = \frac{m g l_f}{l_f + l_r}.
$. 
The measurement vector consists of the lateral acceleration and yaw rate,
\[
y = \begin{bmatrix} a_y \\ r \end{bmatrix}+v(t)=h(x)+v(t) \qquad
a_y = \frac{F_{y,f} + F_{y,r}}{m}.
\]
Since $a_y$ depends on $\mu$ through nonlinear tire dynamics, friction can be estimated from these outputs provided sufficient steering excitation. The various parameter values used in the simulation are as follows. 
\begin{align}
m &= 1500~\text{kg},\;\; I_z  3000~\text{kg}\cdot\text{m}^2, \;\;
l_f = 1.2~\text{m}\nonumber \\l_r &= 1.6~\text{m},\;\; 
u_0 = 10~\text{m/s}, \;\; g = 9.81~\text{m/s}^2,\nonumber \\
B &= 10.0, \;\; C = 1.9, \;\; E = 0.97,\nonumber \\
F_{z,f} &= \frac{m g l_r}{l_f + l_r} = 8829~\text{N}, \;\;
F_{z,r} = \frac{m g l_f}{l_f + l_r} = 6622~\text{N}.\nonumber
\end{align}
The measurement and noise covariance matrix and excitation steering input are chosen to be 
\[
Q= 4\mathrm{diag}(0.1^2,\ 0.01^2),\;\;\;\delta(t)=4
\sin(1.2 \pi t)
\]
No process noise was added except for the small noise to the friction coefficient state. The Koopman-based lifting model is constructed directly from simulation data generated across multiple friction conditions.  We simulate the nonlinear model (\ref{vehicle_dynamics}) for four friction levels $\mu \in \{0.4,0.6,0.8,1.0\}$, each over an $8$\,s horizon with $\Delta t = 0.01$\,s, producing approximately $3\times 10^4$ state-transition samples for DMD.  These snapshot pairs are used to identify a global linear operator $A_{\mathrm{DMD}}$ whose real-valued eigenvectors define the three linear Koopman coordinates, $\Phi=W^\top x$.  Quadratic products of these coordinates form a nine-dimensional extended eigenfunction basis used for lifting.  The nonlinear measurement map $y=h(x)$ is similarly approximated from data by solving a least-squares regression $y \approx \bar C\,\bar \Phi(x)$ over all training trajectories.  Together, the learned lifting map and output operator provide a fully data-driven representation used for computing the KBK gain in the original state space.

The friction coefficient in the simulation is varied from $\mu = 0.3$ to $\mu = 0.7$ halfway through the experiment to evaluate estimator performance under abrupt road condition changes. In Fig. \ref{fig1:example3}, we show the performance of the KBK filter with the varying friction coefficient. From this plot it is clear that the filter can respond to sudden changes in the friction and estimate closely follows the true friction value demonstrating the effecitiveness of data-driven filter design. 

\begin{figure}
    \centering
    \includegraphics[width=0.8\linewidth]{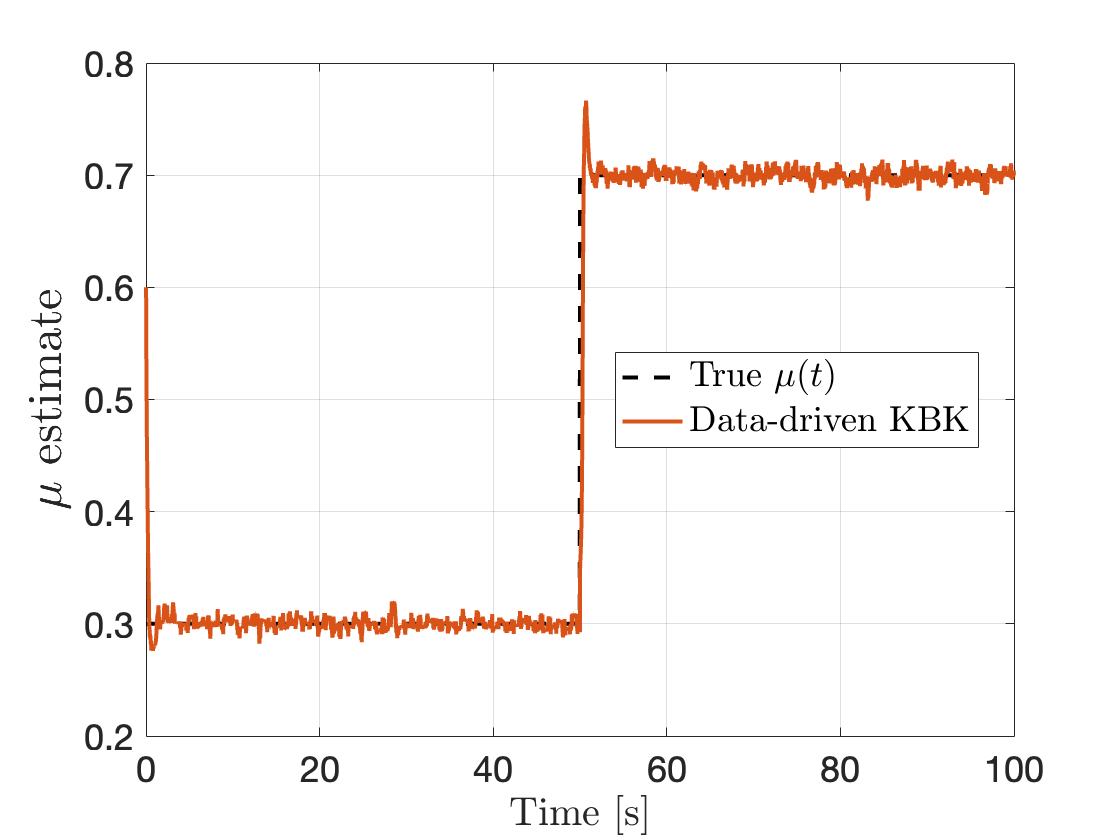}
    \caption{KBK filter for the estimation of friction coefficient}
    \label{fig1:example3}
\end{figure}

\section{Conclusion}
In this paper, we introduced a new framework for nonlinear state estimation based on the spectral properties of the Koopman operator. By lifting the nonlinear dynamics into the Koopman eigenfunction coordinates, we transformed the nonlinear estimation problem into a linear estimation problem in the Koopman spectral space. We showed that the optimal value function for the maximum-likelihood estimator admits a quadratic representation in terms of the principal Koopman eigenfunctions, and that the negative exponential of this value function yields a state-density representation that naturally captures estimation uncertainty.
To enable the practical computation of the principal eigenfunctions, we proposed both a path-integral formulation and a class of dynamics-informed basis functions derived from the method of characteristics. These constructions provide principled mechanisms for approximating the spectral structure governing the estimator.
Future work will focus on developing numerically efficient algorithms for computing Koopman eigenfunctions using these path-integral and characteristic-based techniques, as well as addressing limitations that arise from the non-global nature of principal eigenfunctions in filter design. Together, these directions aim to extend the applicability and robustness of the proposed Koopman-based estimation framework.

\bibliographystyle{IEEEtran}
\bibliography{references_clean}
\begin{IEEEbiography}[{\includegraphics[width=1in,height=1.25in,clip,keepaspectratio]{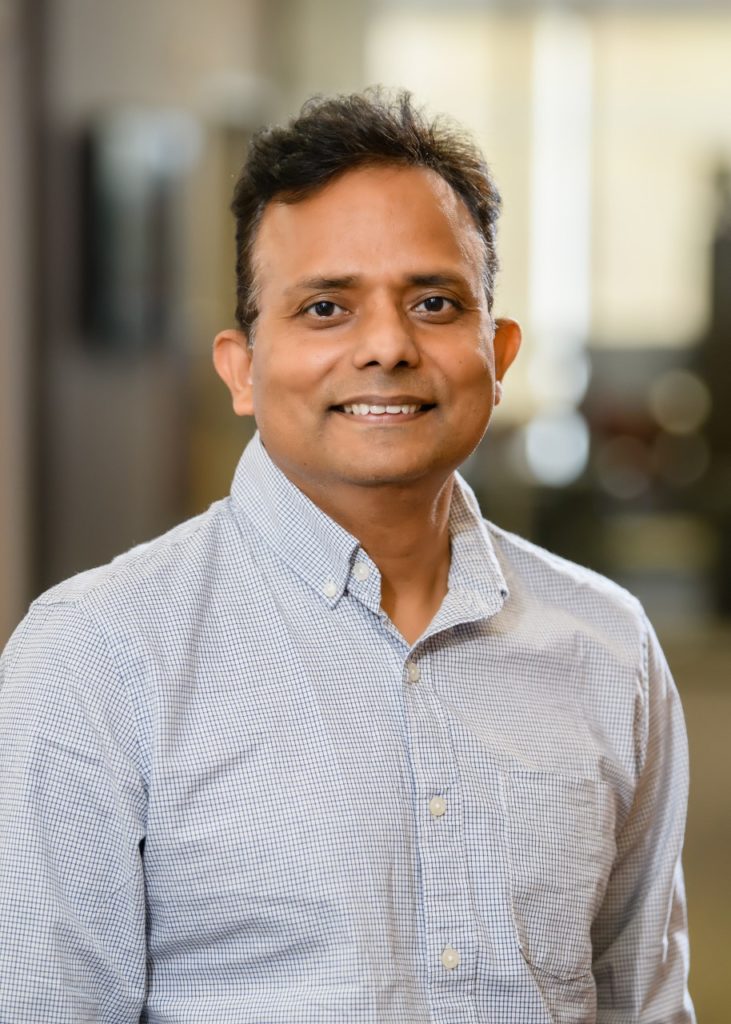}}]%
{Umesh Vaidya}(M’07, SM'19)  received the Ph.D. degree in mechanical engineering from the University of California at Santa Barbara, Santa Barbara, CA, in
2004. He was a Research Engineer at the United Technologies Research Center (UTRC), East Hartford, CT, USA. He is currently a professor in the Department of Mechanical Engineering, Clemson University, S.C., USA. Before joining Clemson University in 2019, and since 2006, he was a faculty with the department of Electrical and Computer Engineering at Iowa State University. He is the recipient of 2012 National Science Foundation CAREER award. His current research interests include dynamical systems and control theory with applications to power grid and robotics. 
\end{IEEEbiography}
\end{document}